\date{\today}
\theoremstyle{theorem}
    \newtheorem{theorem}{Theorem}
    \newtheorem{lemma}{Lemma}
    \newtheorem{proposition}{Proposition}
    \newtheorem{corollary}[theorem]{Corollary}
\theoremstyle{definition} 
    \newtheorem{remark}{Remark}
    \newtheorem{example}[theorem]{Example}
    \newtheorem{exercise}[theorem]{Exercise}
\def\BB{{\mathcal B}}
\def\CC{{\mathcal C}}
\def\vp{\varphi}
\def \CC{{\mathcal C}}
\def\Z{\mathbb{Z}}
\def\N{\mathbb{N}}
\def\R{\mathbb{R}}
\def\S{\mathbb{S}}
\def\Z{\mathbb{Z}}
\def\n{{\bf n}}
\def\l{\left}
\def\r{\right}
\def\<{\langle}
\def\>{\rangle}
\newcommand{\one}{{\mathbf 1}}
\newcommand{\E}{\mbox{\bf E}}
\def\bar{\overline}
\def\Var{\mbox{Var}}
\def\cov{\mbox{Cov}}
\newcommand\mnote[1]{} 
\newcommand\be{\begin{equation*}}
\newcommand\ee{\end{equation*}}
\newcommand\ben{\begin{equation}}
\newcommand\een{\end{equation}}
\newcommand\bes{\begin{eqnarray*}}
\newcommand\ees{\end{eqnarray*}}
\newcommand\bex{\begin{exercise}}
\newcommand\eex{\end{exercise}}
\newcommand\beg{\begin{example}}
\newcommand\eeg{\end{example}}
\newcommand\benu{\begin{enumerate}}
\newcommand\eenu{\end{enumerate}}
\newcommand\beit{\begin{itemize}}
\newcommand\eeit{\end{itemize}}
\newcommand\berk{\begin{remark}}
\newcommand\eerk{\end{remark}}
\newcommand\bdefn{\begin{defintion}}
\newcommand\edefn{\end{definition}}
\newcommand\bthm{\begin{theorem}}
\newcommand\ethm{\end{theorem}}
\newcommand\bprf{\begin{proof}}
\newcommand\eprf{\end{proof}}
\newcommand\blem{\begin{lemma}}
\newcommand\elem{\end{lemma}}
\newcommand{\sm}{{\raise0.3ex\hbox{$\scriptstyle \setminus$}}}
\def\l{\left}
\def\r{\right}
\def\eps{\epsilon}
\newcommand{\imply}{\;\;\;\Longrightarrow\;\;\;}
\def\CHI{\mathchoice%
{\raise2pt\hbox{$\chi$}}%
{\raise2pt\hbox{$\chi$}}%
{\raise1.3pt\hbox{$\scriptstyle\chi$}}%
{\raise0.8pt\hbox{$\scriptscriptstyle\chi$}}}
\def\smalloplus{\raise1pt\hbox{$\,\scriptstyle \oplus\;$}}
\title[Fluctuation and entropy]{  Fluctuation and Entropy in Spectrally Constrained random fields}
\author{Kartick Adhikari}
\address{Department of Mathematics, Indian Institute of Science Education and Research, Bhopal, 462066, India}
\email{kartickmath [at] gmail.com}
\author{Subhroshekhar Ghosh}
\address{Department of Mathematics, National University of Singapore, 10 Lower Kent Ridge Road,
	Singapore 119076}
\email{ subhrowork [at] gmail.com}
\author{Joel L. Lebowitz}
\address{Departments of Mathematics and Physics,
	Rutgers University, Piscataway, New Jersey 08854, USA}
\email{lebowitz [at] math.rutgers.edu}
\date{}
\begin{document}
	\maketitle
\vspace{-15pt}
\begin{abstract}
We investigate the statistical properties of translation invariant random fields (including point processes) on Euclidean spaces (or lattices) under constraints on their spectrum or structure function. An important  class of models that  motivate our study are hyperuniform and stealthy hyperuniform systems, which are characterised by the vanishing of the structure function at the origin (resp., vanishing in a neighbourhood of the origin). We show that many key features of two classical statistical mechanical measures of randomness - namely, fluctuations and entropy, are governed only by some particular local aspects of their structure function.  We  obtain exponents for the fluctuations of the local mass in domains of growing size, and show that spatial geometric considerations play an important role - both  the shape of the domain and the mode of spectral decay. In doing so, we unveil intriguing oscillatory behaviour of spatial correlations of local masses in adjacent box domains. We describe very general conditions under which we show that the field of local masses exhibit Gaussian asymptotics, with an explicitly described limit. We further demonstrate that stealthy hyperuniform systems with joint densities exhibit degeneracy in their asymptotic entropy per site. 
In fact, our analysis shows that entropic degeneracy sets in under much milder conditions than stealthiness, as soon as the structure function fails to be logarithmically integrable.
\end{abstract}

\noindent{\bf Keywords: } Translation invariant random fields, fluctuations, central limit theorem, correlation functions, entropy, hyperuniformity, stealthy systems.
\section{Introduction and Main Results}

\subsection{Setup and notations}
Random fields with spectral constraints have attracted considerable attention in recent years, both in the mathematics and the statistical and condensed matter physics communities. An important motivation, from the  point of view of applications, comes from the investigation of hyperuniform point fields, whose structure function vanishes at the origin. A sub-topic of particular interest recently in condensed matter physics is the study of stealthy hyperuniform systems,  whose structure function vanishes in a region containing the origin.

In this work, we will concern ourselves with random fields that live on Euclidean spaces. For ease of enunciating and analysing the statements of results, we will focus on the situation where the random field $(X_i)_{i \in \Z^d}$ actually lives on a Euclidean lattice, although most of our results have straightforward generalisations to the case of  random fields indexed by a continuum Euclidean space. Restricting ourselves to random fields that are invariant under translations in $\Z^d$ implies that their spatial correlation function depends only on their spatial separation. The spectrum, also known as the  structure function, of the process can then be defined as the Fourier transform of this pair correlation function. We are interested in investigating the properties of  the random field $X$ under the assumption that the spectrum satisfies certain constraints. 

We now introduce the exact definitions and concrete notations. Let $X=(X_i)_{i\in \Z^d}$ be a real valued mean zero and variance one translation invariant random field with covariance
\begin{align*}
\E[X_iX_j]=K(i-j), \mbox{  for $i,j\in \Z^d$,}
\end{align*}
where $K$ is defined on $\Z^d$ and  $\{K(j)\}_{j\in \Z^d}\in \ell_1(\Z^d)$. Note that $K(0)=1$ as the variance is one.   Let $S$  be the Fourier transform of $K$.  Then $S$ is defined on $[-\pi, \pi]^d$ by
\begin{align}\label{eqn:S}
S(\theta)=\sum_{j\in \Z^d}e^{i\theta \cdot j}K(j), \mbox{ where $\theta\in [-\pi,\pi]^d$}.
\end{align}
Here $\theta\cdot j=\sum_{k=1}^d\theta_kj_k$, the usual dot product in the $d$-dimensional Euclidean space.  This function $S$ is known as the {\it diffraction spectrum or structure function} of the field $X$. Observe that $S$ is symmetric, i.e., $S(-\theta)=S(\theta)$, and  Bochner's theorem \cite[Theorem 4.18]{folland} implies that $S$ is non-negative.

For any $\Lambda \subset \R^d$, we denote by $Q_{\Lambda}$   the  charge (or the \textit{local mass}) carried by the random variables located in the region $\Lambda$, i.e.,
\begin{align*}
Q_{\Lambda}(X)=\sum_{i\in \Lambda}X_i.
\end{align*}
It is clear that $\E[Q_{\Lambda}(X)]=0$. We study the fluctuations of $Q_{\mathcal C_L}(X)$ and $Q_{\mathcal B_L}(X)$, where, for $x=(x_1,\ldots, x_d)$, 
\begin{align*}
\mathcal C_L&=\{x\in \R^d: |x_1|, \ldots, |x_d|\le L\} \mbox{ and } \mathcal B_L=\{x\in \R^d: \sqrt{ x_1^2+\cdots+ x_d^2}\le L\}.
\end{align*}
More precisely, the asymptotic values of $\Var(Q_{\CC_L}(X))$ and  $\Var(Q_{\BB_L}(X))$ are calculated, as $L\to \infty$, where $\Var(Y)$ denotes the variance of $Y$. This is done  under various conditions on the kernel function $K$, equivalently, on the structure function $S$.  
We also show that under the appropriate condition on $S$   
 \begin{align*}
 \frac{Q_{\BB_L}(X)-\E[Q_{\BB_L}(X)]}{\sqrt{\Var(Q_{\BB_L}(X))}}\to \mathcal N(0,1), \mbox{ as $L\to \infty$},
\end{align*}
where $\mathcal N(0,1)$ denotes the standard normal distribution.

Finally we study the asymptotic behaviour of the entropy of $X|_{\Lambda}:=\{X_i\; :\;i\in \Lambda\}$, as the size of the region $\Lambda$ increases, in the setting where $X$ has joint densities.

\subsection{Fluctuations for constrained systems}
A key measure of randomness that we will focus on in this paper is fluctuations of the local field of masses. This is motivated by the study of point processes, but as we shall see, it can be considered also in the context of more general random fields, and with important structural consequences. While the exact definitions will be given  subsequently it suffices for our introductory discussion to have in mind that the local field of masses of a random field (indexed by a Euclidean lattice), pertaining to a given domain $D$, is the sum of the field values for indices that belong to $D$. Clearly, if the field is 0-1 valued (in other words, a point process), then this quantity reduces to the total number of particles in the domain $D$. In this work, without loss of generality we focus on random field with mean zero, since subtracting  the mean has no effect on the  variance. The study of the statistical fluctuations of masses has a long history, see e.g., \cite{beck87}, \cite{beck1987chen} and the references therein, and e.g. \cite{GhL-survey} for a more recent overview.

The starting point for examining the connection between spectral decay and reduced fluctuations of the local mass is the fact, alluded to earlier, that hyperuniformity is equivalent to vanishing of the structure function at the origin. For the Gaussian Unitary Ensemble (GUE) in 1D or the Ginibre ensemble in 2D (equivalently, the one component 2D Coulomb gas at the  inverse temperature $\beta=2$), this amounts to a linear decay at the origin, and fluctuation of local mass that is logarithmic (for GUE) and of the order of the perimeter of the domain (for Ginibre). In both cases, asymptotic normality of the fluctuations is also well-understood.

In this paper we investigate the precise quantitative nature of the relationship  between the mode of spectral decay on one hand,  and the asymptotic statistical distribution of fluctuation on the other. We demonstrate that the correspondence between spectral decay and the statistical behaviour of the fluctuations  is very general, and is largely independent of other properties of the stochastic process. 
\vspace{-3pt}
\subsubsection{ \textbf{Fluctuations in Balls } }
Our first result provides a functional of the structure function that, on one hand, is asymptotically comparable to  the variance of the mass in a ball, and, on the other hand, is amenable to simple analytical determination of the growth exponent. This result holds in great generality, in particular, both for random fields indexed by $\Z^d$ and by $\R^d$.  We state the precise result below.

Let $\BB_L$ denote the ball of radius $L$ in $\R^d$, i.e.,
$
\BB_L=\{x\in \R^d \; : \; \|x\|\le L \}. 
$ Furthermore, let $\S^{d-1}$ denote the $(d-1)$-dimensional unit sphere in $\R^d$ and $d V_{\S^{d-1}}$ denote the $(d-1)$-dimensional spherical measure. Let $\mathbb{T}^d=[-\pi,\pi]^d$.
We call a function $f: \mathbb{T}^d \;(\mathrm{or }\; \R^d) \mapsto \R$  \textit{regular at the origin} if there exists an \textit{enveloping function} $A : \mathbb{T}^d \;(\mathrm{or }\; \R^d) \mapsto \R$ such that, on some neighbourhood $B(0;\eps)$ of the origin, we have $cA(\xi) \le f(\xi) \le C A(\xi)$ for some constants $c,C>0$, and the enveloping function $A$ is such that $\|\xi\|^{-2} \l({\int_{\S^{d-1}}A(\|\xi\|\omega)dV_{\S^{d-1}}(\omega)}\r)$ is monotone for $0\le \|\xi\|\le \epsilon$. In general, the function $f$ can possibly have highly fluctuating behaviour, it might be  difficult to work with such functions. On the other hand the enveloping function has nicer properties in the context of the estimate that we need, which helps in the calculations. 

For example,  $f$ might be highly fluctuating near the origin but $f(\xi)/\|\xi\|$ might be bounded by two positive constants. In that case $A(\xi)=\|\xi\|$, which implies that $\|\xi\|^{-2} \l({\int_{\S^{d-1}}A(\|\xi\|\omega)dV_{\S^{d-1}}(\omega)}\r)=\mbox{const.}/ \|\xi\|$ which is clearly monotone.

Furthermore, we recall the notation $f(L)=\Theta (g(L))$ as $L\to \infty$ to mean that  there exist constants  $c_1,c_2, L_0>0$ such that 
\[
c_1 g(L)\le f(L) \le c_2 g(L), \mbox{ for all $L>L_0$}.
\]
More generally, we write $f(x)=\Theta(g(x))$ as $x\to x_0$ if and only if there exist constants $c_1,c_2,\delta>0$ such that 
\[
c_1g(x)\le f(x)\le c_2g(x), \mbox{ for all $\|x-x_0\|\le \delta$.}
\]

We establish that
%
\begin{theorem}\label{thm:varianceball}
   Suppose the structure function $S$ of a random field on $\Z^d$ is regular at the origin and $S$ is bounded in $[-\pi,\pi]^d$.
	Then, as $L\to \infty$,
	\begin{align}\label{eq:varballeq} 
	\Var(Q_{\BB_{L}}(X))=\Theta\l( L^{2d}\int_{\|\xi\|\le {c}/{L}}S(\xi)d\xi + L^{d-1}\int_{\|\xi\|> {c}/{L}}\frac{S(\xi)}{\|\xi\|^{d+1}}d\xi\r). 
	\end{align}
 
\end{theorem}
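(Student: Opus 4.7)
My plan starts from the spectral identity
\[
\Var(Q_{\BB_L}(X)) = \sum_{i,j \in \BB_L \cap \Z^d} K(i-j) = \frac{1}{(2\pi)^d}\int_{\mathbb{T}^d} S(\theta)\, |D_L(\theta)|^2\, d\theta,
\]
where $D_L(\theta) = \sum_{j \in \BB_L \cap \Z^d} e^{i\theta\cdot j}$, obtained by inserting the Fourier representation of $K$ and interchanging summations. Since $S$ is bounded on $\mathbb{T}^d$, a Riemann-sum (Poisson-summation) comparison allows me to replace the lattice sum $D_L$ by the continuum Fourier transform of the ball indicator, for which one has the classical Bessel identity
\[
\bigl|\widehat{\mathbf{1}_{\BB_L}}(\theta)\bigr|^2 = (2\pi)^d\, \frac{L^d}{\|\theta\|^d}\, J_{d/2}(L\|\theta\|)^2.
\]
The problem thus reduces to estimating $\int_{\mathbb{T}^d} S(\theta)\, L^d \|\theta\|^{-d}\, J_{d/2}(L\|\theta\|)^2\, d\theta$ from above and below.

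I would then split the integration into an inner region $\{\|\theta\| \le c/L\}$ and an outer region $\{\|\theta\| > c/L\}$, with $c$ small enough that the inner region sits inside the neighbourhood $B(0;\eps)$ where the envelope bound $cA \le S \le CA$ holds. On the inner region, the small-argument behaviour $J_{d/2}(r)^2 \asymp r^d$ gives $|\widehat{\mathbf{1}_{\BB_L}}(\theta)|^2 \asymp L^{2d}$, producing the first term $L^{2d}\int_{\|\theta\|\le c/L}S(\theta)\,d\theta$ of \eqref{eq:varballeq}. On the outer region, the classical asymptotic
\[
J_{d/2}(r)^2 = \frac{1}{\pi r}\bigl(1 + \cos(2r - c_d)\bigr) + O(r^{-2}), \qquad r\to\infty,
\]
combined with polar coordinates $\theta = r\omega$ and the spherical average $\widetilde{S}(r) = \int_{\S^{d-1}} S(r\omega)\, dV_{\S^{d-1}}(\omega)$, yields a principal part $\asymp L^{d-1}\int_{c/L}^{\pi\sqrt d} r^{-2}\,\widetilde{S}(r)\, dr$, which unfolds in Cartesian form to $L^{d-1}\int_{\|\theta\|>c/L}S(\theta)\|\theta\|^{-d-1}d\theta$, i.e.\ the second term of \eqref{eq:varballeq}.

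The main obstacle is the oscillatory remainder $L^{d-1}\int_{c/L}^{\eps} r^{-2}\,\widetilde{S}(r)\,\cos(2Lr - c_d)\,dr$ produced by the $\cos$ part of the Bessel asymptotic. This is exactly where the regularity hypothesis intervenes: the envelope comparison gives $\widetilde{S}(r) \asymp \widetilde{A}(r)$ on $[0,\eps]$, and by assumption $r^{-2}\widetilde{A}(r)$ is monotone there. The second mean value theorem for integrals (Bonnet's form), applied to the product of this monotone function with the rapidly oscillating $\cos(2Lr)$, bounds the remainder by a constant multiple of the boundary value $r^{-2}\widetilde{A}(r)\big|_{r=c/L}$ (with an extra factor $1/L$ from integrating $\cos$), which is absorbed into the main term. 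On the complementary range $\{\|\theta\|>\eps\}\cap\mathbb{T}^d$, the boundedness of $S$ together with the uniform bound $|\widehat{\mathbf{1}_{\BB_L}}(\theta)|^2 \le C L^{d-1}\|\theta\|^{-d-1}$ handles the contribution directly. Assembling the inner and outer estimates, with the oscillatory remainder absorbed into the $\Theta$-constants, yields the claimed equivalence in \eqref{eq:varballeq}.
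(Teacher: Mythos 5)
Your architecture is essentially the paper's: the spectral identity of Fact \ref{ft:formula}, the Bessel representation $\hat\vp_L(\xi)=c_dL^{d/2}\|\xi\|^{-d/2}J_{d/2}(L\|\xi\|)$, the split at $\|\xi\|=c/L$, the bound $|\hat\vp_L(\xi)|^2\asymp L^{2d}$ on the inner region, and the large-argument Bessel asymptotic on the outer region; the upper bound goes through exactly as you describe. The genuine gap is in your treatment of the oscillatory remainder for the \emph{lower} bound. After writing $J_{d/2}(Lr)^2\approx\frac{1}{\pi Lr}\bigl(1+\cos(2Lr-c_d)\bigr)$, the remainder you must control is $\int_{c/L}^{\eps}r^{-2}\widetilde S(r)\cos(2Lr-c_d)\,dr$, whose amplitude is $r^{-2}\widetilde S(r)$ --- and this function is \emph{not} assumed monotone; only the envelope $r^{-2}\widetilde A(r)$ is. Bonnet's second mean value theorem must be applied to the actual integrand, and the comparison $\widetilde S\asymp\widetilde A$ cannot be inserted into a sign-changing integral: a function comparable to a monotone one can itself carry a $\Theta(1)$ fraction of its mass at frequency $2L$ (take $\widetilde S(r)=\widetilde A(r)\bigl(1+\tfrac12\cos(2Lr-c_d)\bigr)$), in which case your ``remainder'' is of the same order as the main term with the wrong sign, and the lower bound fails.

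The repair is to use nonnegativity of the weight \emph{before} touching the oscillation: since $1+\cos(2Lr-c_d)\ge 0$ (equivalently $\cos^2(Lr-\phi)\ge 0$) and $S\ge c_1A$ pointwise near the origin, you may replace $\widetilde S$ by $\widetilde A$ in the entire outer integral first, and only then analyse the oscillation against the genuinely monotone $r^{-2}\widetilde A(r)$ --- by Bonnet, or by the paper's period-by-period Riemann-sum bound $\int_{r_k}^{r_{k+1}}\cos^2(L\|\xi\|-\phi)\,d\|\xi\|\ge\pi/(4L)$, which is the route the paper takes. Even after this fix, note that the Bonnet boundary term $L^{-1}(c/L)^{-2}\widetilde A(c/L)$ is of the \emph{same} order in $L$ as the main term $\int_{c/L}^{\eps}r^{-2}\widetilde A(r)\,dr$ (check $\widetilde A(r)\sim r^{\alpha}$, $0\le\alpha\le1$), so ``absorbed into the main term'' requires choosing $c$ large enough that its constant is strictly smaller; this needs to be said. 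Finally, your replacement of the lattice exponential sum $D_L$ by the continuum transform $\widehat{\one_{\BB_L}}$ is asserted rather than proved, but the paper is equally silent on that discretization, so I only flag it.
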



The regularity assumption in Theorem \ref{thm:varianceball} holds true in great generality. It is trivially true in  situations where the structure function $S$ is itself monotone in $\|\xi\|$  near the origin, which is the case for the Poisson process and the standard examples of hyperuniform systems. More generally, by a Taylor expansion of $S$ near the origin, such regularity will be true as soon as $S$  has bounded derivatives near the origin.  

We observe that the asymptotic formula in Theorem \ref{thm:varianceball} provides a very general and complete description  for the fluctuation exponent in a ball that is valid in any dimension, and covers all  settings where this exponent is believed to be understood. Moreover, the right hand side of \eqref{eq:varballeq} lends itself to relatively simple estimation of its order in $L$ as soon as the behaviour of the structure function near the origin is known.   E.g., for Poissonian  systems, the structure function does not vanish near the origin (in fact, it is a positive constant), and as such, the first term in \eqref{eq:varballeq} provides the dominant contribution, resulting in fluctuations of the order of the volume. On the other hand, for well-known hyperuniform systems like the GUE in 1D (where $S$ is linear near the origin) and the Ginibre ensemble in 2D (where $S$ behaves like a quadratic near the origin), the second terms in \eqref{eq:varballeq} makes the dominant contribution, and the variance grows like $\log L$ and $L$ respectively.


\subsection{The geometry of spectral decay and its consequences}
The most intensively studied setting for hyperuniform systems is the 1D case, which includes the famous example of the GUE process from random matrix theory. However, in 1D, the variety of ways in which the  spectrum can vanish at the origin is relatively limited. In higher dimensions, the mode of decay of the  spectrum becomes important, and the physical manifestations of the various decay modes is a significant question.


\subsubsection{\textbf{Fluctuations under spectral decay}}
	
Let $n_1,\ldots, n_d\in \N\cup \{0\}$ and $\n=(n_1,\ldots, n_d)$. Denote
$$
\CC^{(\n)}_{L}=[n_1L,(n_1+1)L]\times \cdots \times [n_dL, (n_d+1)L].
$$
Note that $\CC^{(\n)}_{L}$ is a cube/box in $\R^d$ with side length $L$. In particular $\CC_L^{(0)}=[0,L]^d$. We have the following result.

\begin{theorem}\label{lem:covarianced}
	 Suppose  the structure function $S$ of a random field on $\Z^d$ satisfies $\sigma^2_d:=2^d\int_{-\pi}^{\pi}\cdots \int_{-\pi}^{\pi}\frac{S(x)}{x_1^2\cdots x_d^2}dx_1\cdots dx_d<\infty$. Then
	$$
	\lim_{L\to\infty}\cov(Q_{\CC^{(0)}_{L}}(X),Q_{\CC^{(\n)}_{L}}(X))=\l\{\begin{array}{ll}
	(-1)^j\frac{\sigma_d^2}{2^j} &\mbox{ if } \mbox{dim}(\CC_{L}^{(0)}\cap \CC_{L}^{(\n)})=d-j,\\
	\\
	0 & \mbox{ if } \CC_{L}^{(0)}\cap \CC_{L}^{(\n)}=\emptyset.
	\end{array}\r.
	$$
In particular, $\Var(Q_{\CC^{(0)}_{L}}(X))\to \sigma^2_d$ as $L\to \infty$.
\end{theorem}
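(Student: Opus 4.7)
The plan is to generalize the Fourier-analytic proof of Theorem \ref{lem:variance} from $d=1$ to arbitrary $d$, leveraging the tensor-product structure enjoyed by the cubes $\CC_L^{(\mathbf n)}$. Inserting the Fourier representation $K(m)=(2\pi)^{-d}\int_{[-\pi,\pi]^d} S(\theta)e^{-i\theta\cdot m}d\theta$ into the double sum defining the covariance, and exploiting the factorization of the geometric sums across coordinate axes, yields
\[
\cov\bigl(Q_{\CC_L^{(0)}}(X),\, Q_{\CC_L^{(\mathbf n)}}(X)\bigr) \;=\; \frac{1}{(2\pi)^d}\int_{[-\pi,\pi]^d} S(\theta)\prod_{k=1}^d e^{i n_k L \theta_k}\,|D_L(\theta_k)|^2\,d\theta,
\]
where $D_L(\theta_k):=\sum_{m=0}^L e^{im\theta_k}$. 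Apart from the joint weight $S(\theta)$, the integrand has a clean one-dimensional product structure that reduces the problem to a multidimensional Riemann--Lebesgue analysis.

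I next use the identity $|D_L(\theta)|^2 = (1-\cos((L+1)\theta))/(2\sin^2(\theta/2))$ to rewrite each coordinate factor as
\[
e^{in_k L\theta_k}|D_L(\theta_k)|^2 \;=\; \frac{1}{2\sin^2(\theta_k/2)}\Bigl[\,e^{in_k L\theta_k}-\tfrac12 e^{i((n_k+1)L+1)\theta_k}-\tfrac12 e^{i((n_k-1)L-1)\theta_k}\,\Bigr].
\]
Since $\sin^2(\theta_k/2)\asymp \theta_k^2$ near the origin, the hypothesis $\sigma_d^2<\infty$ guarantees that $S(\theta)/\prod_k\sin^2(\theta_k/2)$ is an integrable weight on $[-\pi,\pi]^d$. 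Expanding the product over the $d$ coordinates into $3^d$ oscillatory terms and invoking the multidimensional Riemann--Lebesgue lemma iteratively in each coordinate against this integrable weight, a term survives the limit $L\to\infty$ only if the coefficient of $L$ in the exponent of each coordinate vanishes. Case-by-case this forces $n_k\in\{-1,0,1\}$ with a uniquely determined branch: the first (coefficient $+1$) when $n_k=0$, the third (coefficient $-1/2$) when $n_k=+1$, and the second (coefficient $-1/2$) when $n_k=-1$; the corresponding residual exponent in coordinate $k$ is $-n_k\theta_k$ in every case.

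Hence if some $|n_k|\ge 2$, which is precisely the case $\CC_L^{(0)}\cap\CC_L^{(\mathbf n)}=\emptyset$, no branch survives and the covariance tends to $0$. Otherwise, with $j:=\#\{k:n_k\ne 0\}=d-\dim(\CC_L^{(0)}\cap\CC_L^{(\mathbf n)})$, precisely one term survives, contributing the coefficient $(-1/2)^j$, which yields the announced sign pattern; the variance statement is then recovered as the special case $\mathbf n=\mathbf 0$, $j=0$. The residual oscillatory factor $\prod_{k:n_k\ne 0}e^{-in_k\theta_k}$ can be replaced by $\prod_{k:n_k\ne 0}\cos\theta_k$ via the joint symmetry $S(\theta)=S(-\theta)$. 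The main obstacle will be the final constant-matching step: reconciling the resulting spectral integral, which naturally appears with the lattice weight $\prod_k\sin^{-2}(\theta_k/2)$, with the stated limit $\sigma_d^2=2^d\int S(\theta)/\prod_k\theta_k^2\,d\theta$. This requires expanding $\cos\theta_k=1-2\sin^2(\theta_k/2)$ to generate telescoping cross-terms, together with a careful accounting of the lower-dimensional shared lattice faces between adjacent discrete cubes, whose contributions must be shown to be subleading via further Riemann--Lebesgue applications so as not to disturb the stated limiting constant.
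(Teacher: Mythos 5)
Your proposal follows the same skeleton as the paper's proof: write the covariance as a spectral integral against $S$ (Fact \ref{ft:formula}), factor the Fourier weight of the two cubes across coordinates, expand each coordinate factor into finitely many oscillatory terms, kill every term whose frequency grows with $L$ via the Riemann--Lebesgue lemma (exactly where $\sigma_d^2<\infty$ enters), and read off the sign $(-1)^j$ and the factor $2^{-j}$ by counting surviving branches. The branch bookkeeping is right: for $n_k=0$ the surviving coefficient is $1$, for $n_k=\pm1$ it is $-\tfrac12$, and some $|n_k|\ge 2$ (disjoint cubes) kills everything. The one real divergence from the paper is that you compute the exact lattice sum $D_L(\theta)=\sum_{m=0}^{L}e^{im\theta}$, whereas the paper takes $\hat\vp_{\n,L}$ to be the continuous integral $\int_{\CC_L^{(\n)}}e^{-ix\cdot t}\,dt=\prod_k (1-e^{-iLx_k})e^{-iLn_kx_k}/(ix_k)$, so that its weight is exactly $\prod_k x_k^{-2}$, its surviving residual frequencies are exactly $0$, and its limit is $(-1)^j2^{d-j}\hat f(0)$ with $\hat f(0)=\sigma_d^2/2^d$ on the nose.

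The gap is precisely the step you defer to the end, and your proposed fix will not close it. Your computation yields a limit of the form $(-\tfrac12)^j\,c\int S(\theta)\prod_k\bigl(2\sin^2(\theta_k/2)\bigr)^{-1}\prod_{k:n_k\ne0}\cos\theta_k\,d\theta$, and this does not reduce to $(-1)^j\sigma_d^2/2^j$ by telescoping: first, $\int S\prod_k(2\sin^2(\theta_k/2))^{-1}d\theta$ and $2^d\int S\prod_k\theta_k^{-2}d\theta$ are genuinely different constants --- they agree only to leading order near $\theta=0$, while both integrals run over all of $[-\pi,\pi]^d$; second, writing $\cos\theta_k=1-2\sin^2(\theta_k/2)$ produces correction terms proportional to $\int S(\theta)\prod_{k'\ne k}(2\sin^2(\theta_{k'}/2))^{-1}d\theta$, which are finite, enter at the same order as the main term, and have no reason to vanish --- they are exactly the contribution of the lattice points shared by adjacent discrete cubes, so they cannot be dismissed as ``subleading''. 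In fairness, this mismatch is not purely your doing: it is created by the paper's own substitution of $\int_0^{L}e^{-ixt}dt$ for the lattice sum $\sum_{m=0}^{L}e^{-ixm}$ that Fact \ref{ft:formula} actually calls for, and your version is the one consistent with the $\Z^d$ setting. But to prove the theorem \emph{as stated} you must either adopt the continuum normalization throughout (as the paper implicitly does) or prove that the two limiting constants coincide; as written, your argument establishes convergence and the relative pattern of signs and $2^{-j}$ factors, but identifies a different limiting constant and leaves unresolved boundary corrections for $j\ge1$.
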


   Here dim$(\CC_{L}^{(0)}\cap \CC_{L}^{(\n)})=0$ means the two cubes/boxes have only a common vertex point. 
   

In particular for $d=1$ we have the following corollary. The importance of the 1D case is that, besides being the most studied setting, it is also the situation where we can examine the effect of stealth or other spectral constraints without having to consider the intricacies of geometric effects.

   \begin{corollary}\label{lem:variance}
   	Suppose the structure function $S$ of a random field on $\Z$ satisfies  $\sigma^2:=2\int_{-\pi}^{\pi}\frac{S(x)}{x^2}dx<\infty$. Then
   	\begin{align*}
   		\Var(Q_{[0,L]}(X))\to \sigma^2 \;\mbox{ and }\;  \cov(Q_{[0,L]}(X),Q_{{[L,2L]}}(X))\to -\frac{\sigma^2}{2}, \;\mbox{ as $L\to \infty$},
   	\end{align*}
   	where    $\cov(X,Y)$ denotes the covariance of $X$ and $Y$. Moreover, for $k\ge 2$,
   	\begin{align*}
   		\cov(Q_{[0,L]}(X),Q_{[kL,(k+1)L]}(X))\to 0, \; \mbox{ as $L\to \infty$. }
   	\end{align*}
   \end{corollary}
   
   
   Note that if $S$ is bounded and vanishing in a neighbourhood of the origin then 
   $
   \int \frac{S(x)}{x^2}dx<\infty.
   $
   In this case the covariance converges to the half of the variance with negative sign when two intervals are adjacent to each other. Moreover, the covariance converges to $0$ when two intervals are not adjacent.
 
 The Riemann Lebesgue lemma plays a crucial role in proving these results. The assumption $\sigma_d^2<\infty$  is required to apply the Riemann Lebesgue lemma. In the next subsection we show that if $\sigma_d^2$ is not finite then the variances and covariances do not have finite limits as $L\to \infty$. We calculate the asymptotic behaviour of the variances and covariances under suitable conditions on $S$.
 
Note that Theorem \ref{lem:covarianced}  implies that if $\sigma_d$ is finite then $\Var(Q_{ \CC_{L}}(X))$ remains bounded in $L$,   whereas Theorem \ref{thm:varianceball} implies that the order of $\Var(Q_{\BB_{L}}(X))$ is at least $L^{d-1}$ (also c.f. \cite{beck87}). A heuristic explanation for this phenomenon is given in Section \ref{sec:explanation}.
 
 It is worth mentioning here that the similar results for Ginibre ensembles were established in \cite{lebowitz1983charge}. The similar problems for the zeros of Gaussian entire functions were consider in \cite{buckely-sodin}.
 
\subsubsection{\textbf{Growth of variances under  spectral decay}} We  have the following result. We use $x=(x_1,\ldots, x_d)\in \R^d$ and $\CC_L=\CC_{L}^{(0)}$.
\begin{theorem}\label{lem:variancenonzerodge2}
	Let  $\alpha_1,\ldots, \alpha_d\in [0,1]$.  Suppose the structure function $S$ of a random field on $\Z^d$ satisfies  
\begin{enumerate}
	\item[(C1)] For all $k=1,\ldots, d$ and all $1\le i_1<\cdots<i_k\le d$, ${S(x)}=\Theta(|x_{i_1}|^{\alpha_{i_1}}\ldots |x_{i_k}|^{\alpha_{i_k}})$
	whenever $x_{i_1},\ldots,x_{i_k}\to 0$ and the other co-ordinates are away from the axes.
	
	\item[(C2)] For $\delta>0$, $\int_{\delta}^{\pi}\cdots \int_{\delta}^{\pi}\frac{S(x)}{x_1^2\ldots x_d^2}dx_1\cdots dx_d<\infty$.
\end{enumerate}
Then we have
	\begin{align*}
	\Var(Q_{\CC_{L}}(X))=\Theta((\log L)^{\tau_d}L^{d-m_{d}}), \mbox{ as $L\to \infty$},
	\end{align*}
	where $\tau_d=|\{k\in \{ 1,\ldots, d\}\; : \; \alpha_k=1\}|$ and $m_{d}=\sum_{k=1}^{d}\alpha_k$. 
\end{theorem}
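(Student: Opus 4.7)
The plan is to start from the spectral representation of the variance. Expanding $\Var(Q_{\CC_L}(X)) = \sum_{i,j \in \CC_L \cap \Z^d} K(i-j)$ and inserting the Fourier inversion $K(j) = (2\pi)^{-d}\int_{\mathbb{T}^d} S(\theta) e^{-i\theta\cdot j}\,d\theta$, one obtains
\begin{equation*}
\Var(Q_{\CC_L}(X)) = \frac{1}{(2\pi)^d}\int_{\mathbb{T}^d} S(\theta)\, F_L(\theta)\, d\theta, \qquad F_L(\theta) := \prod_{k=1}^d \frac{\sin^2((L+1)\theta_k/2)}{\sin^2(\theta_k/2)}.
\end{equation*}
This reduces the problem to a careful asymptotic analysis of a multi-dimensional Fej\'er-type integral against $S$.

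Fix a small $\delta > 0$, and for each $I \subseteq \{1,\ldots,d\}$ define the region $R_I = \{\theta \in \mathbb{T}^d : |\theta_k| < \delta \iff k \in I\}$. By condition (C1), on $R_I$ the structure function satisfies $S(\theta) = \Theta\bigl(\prod_{k \in I}|\theta_k|^{\alpha_k}\bigr)$, with the implicit constants uniform over the coordinates bounded away from the axes. The core one-dimensional estimate, obtained by substituting $u = L\theta$ and using $\sin^2(\theta/2) = \Theta(\theta^2)$ on $[-\delta,\delta]$, is
\begin{equation*}
\int_{-\delta}^{\delta} |\theta|^{\alpha}\, \frac{\sin^2(L\theta/2)}{\sin^2(\theta/2)}\, d\theta \;=\; \Theta\!\left( (\log L)^{\mathbf{1}(\alpha = 1)}\, L^{1-\alpha} \right),
\end{equation*}
where the logarithm arises precisely at $\alpha = 1$ from the divergence of $\int |u|^{\alpha}\sin^2(u/2)/u^2\,du$ at infinity; for $\alpha < 1$ the same integral converges and gives the clean power $L^{1-\alpha}$. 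For coordinates $k \notin I$ (i.e.\ $|\theta_k| \geq \delta$), the factor $\sin^2(L\theta_k/2)/\sin^2(\theta_k/2)$ is pointwise bounded by $1/\sin^2(\delta/2)$, and the integrability of $S(\theta)\prod_{k\notin I}\sin^{-2}(\theta_k/2)$ over this range --- guaranteed by condition (C2) --- yields a $\Theta(1)$ contribution as $L\to\infty$.

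Combining the per-coordinate estimates by Fubini, the region $R_{\{1,\ldots,d\}}$, in which every coordinate is near the origin, contributes exactly $\Theta\bigl((\log L)^{\tau_d} L^{d-m_d}\bigr)$. For any proper subset $I \subsetneq \{1,\ldots,d\}$, the contribution of $R_I$ is of strictly lower order: removing an index $k$ from $I$ replaces a factor $(\log L)^{\mathbf{1}(\alpha_k=1)} L^{1-\alpha_k}$ by a $\Theta(1)$ factor, which is a strict reduction in growth rate whenever $\alpha_k \le 1$. Adding up the $2^d$ regions produces matching upper and lower bounds of order $(\log L)^{\tau_d} L^{d-m_d}$, as claimed.

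The main obstacle is making rigorous the factorized estimates under condition (C1), which only provides a multiplicative envelope for $S$ in each asymptotic regime rather than an exact product structure; in particular, one must verify that the upper and lower $\Theta$-constants combine consistently across the $2^d$ regions $R_I$ and across the transitional zones $|\theta_k| \approx \delta$. This can be handled by performing a dyadic decomposition of each coordinate range separately and bounding $S$ above and below on each dyadic shell by the envelope $\prod_{k \in I}|\theta_k|^{\alpha_k}$ times uniform constants, after which the per-coordinate estimates apply cleanly.
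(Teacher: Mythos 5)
Your proposal is correct and follows essentially the same route as the paper: a spectral representation of the variance, decomposition of $\mathbb{T}^d$ into the $2^d$ regions according to which coordinates lie within $\delta$ of the origin, a one-dimensional estimate $\int_{-\delta}^{\delta}|\theta|^{\alpha}\sin^2(L\theta/2)/\sin^2(\theta/2)\,d\theta=\Theta((\log L)^{\mathbf{1}(\alpha=1)}L^{1-\alpha})$ applied coordinate-wise via Fubini, and the observation that the all-coordinates-small region dominates while (C2) controls the rest. The only cosmetic difference is that you use the exact discrete Fej\'er kernel $\prod_k\sin^2((L+1)\theta_k/2)/\sin^2(\theta_k/2)$ where the paper uses the continuum transform $\prod_k 4\sin^2(L x_k)/x_k^2$ of the indicator; these agree up to constants on $[-\pi,\pi]^d$, so the arguments are interchangeable.
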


\begin{remark}
Condition  (C1) can be written in the following way:   for $1 \le i_1, \ldots, i_k \le d $ and $\delta>0$, let  $A_{i_1.\ldots,i_k}^{\delta}$ be the set $\{ x=(x_1,\ldots,x_d) : |x_i| < \delta \;\mathrm{ iff }\; i \in  \{i_1,\ldots,i_k\} \}$. Then $S(x)=\Theta(|x_{i_1}|^{\alpha_{i_1}}\cdots |x_{i_k}|^{\alpha_{i_k}})$ whenever $x \in A_{i_1.\ldots,i_k}^{\delta}$. 

Note that if $S$ is bounded on $[-\pi,\pi]^d$ then (C2) holds. Also if $\alpha_i>1$ for all $i=1,\ldots,k$ then $\frac{S(x)}{x_1^{2}\cdots x_d^{2}}$ is integrable . In this case we get the variance from Theorem \ref{lem:covarianced}. So the variances are bounded when $\alpha_1,\ldots, \alpha_d> 1$. 

Observe that the implying constants for the variance may depend on the dimension $d$.
\end{remark}

It would be of interest to understand the various modes of spectral decay, especially in the context of hyperuniform behaviour of the stochastic process in  physical space. Hyperuniformity, or equivalently the vanishing of the structure function at the origin, implies that the correlation function $K(\underline{i})$ (which is a function of one variable because of translation invariance) sums to 0 over $\underline{i} \in \Z^d$. In particular, this implies that some of the correlations must be negative (since $K(0)$ is a variance and therefore necessarily positive), which explains the natural connections between hyperuniform systems and negatively associated processes. Decay of the structure function ``along the axes'' amounts to saying that $K(\underline{i})$ sums to 0 even when we sum $\underline{i}$ over any one co-ordinate, keeping the values of the other co-ordinates of $\underline{i}$ fixed. It is a somewhat stronger notion of hyperuniformity than mere vanishing of the structure function at the origin, and it covers examples as simple as  products of statistically independent 1D hyperuniform systems along each co-ordinate direction.  More generally, by the Bochner-Khinchine correspondence, we can have Gaussian stochastic processes with any given functional form for the decay of the structure function, as long as we remain within the realm of non-negative spectral measures.

The following result can be seen as a corollary of Theorem  \ref{lem:variancenonzerodge2}.  For $x=(x_1,\ldots, x_d)\in \R^d$, the $L^2$-norm is defined by
\[
\|x\|_2=(|x_1|^2+\cdots+|x_d|^2)^{\frac{1}{2}}.
\] 
\begin{corollary}\label{prop:d=2}
	Let   $0\le \alpha \le 1$. Suppose the structure function $S=\Theta(\|x\|_2^\alpha)$ as $\|x\|_2\to 0$, and $\int_{\|x\|_2>\delta} \frac{S(x)}{x_1^2\ldots x_d^2}dx<\infty$ for $\delta>0$. Then
	\begin{align*}
	\Var(Q_{\CC_{L}}(X))=\l\{ \begin{array}{ll}
	\Theta((L^{d-\alpha})) & \mbox{if $0\le \alpha<1$},\\
	&\\
	\Theta((L^{d-1}\log L)) & \mbox{if $\alpha=1$}.
	\end{array}
	\r.
	\end{align*}
\end{corollary}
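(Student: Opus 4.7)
The plan is to reduce the structure function $S$, which behaves like $\Theta(\|\xi\|_p^\alpha)$ near the origin but is \emph{not} of the product form required by condition (C1) of Theorem \ref{lem:variancenonzerodge2}, to a finite sum of axis-type functions, and then apply Theorem \ref{lem:variancenonzerodge2} to each summand. The central tool is the spectral representation
\[
\Var(Q_{\CC_L}(X)) = \frac{1}{(2\pi)^d}\int_{[-\pi,\pi]^d} D_L(\xi)\, S(\xi)\, d\xi, \qquad D_L(\xi) := \prod_{k=1}^d \frac{\sin^2((L+1)\xi_k/2)}{\sin^2(\xi_k/2)},
\]
which is linear in $S$, combined with the elementary norm equivalences on $\R^d$, namely $\|\xi\|_p^\alpha \asymp \max_j |\xi_j|^\alpha \asymp \sum_{j=1}^d |\xi_j|^\alpha$, with constants depending only on $d,p,\alpha$.

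I would fix a small $\delta > 0$ and choose a smooth even function $\phi:[-\pi,\pi]\to[0,\infty)$ with $\phi(t) = \Theta(|t|^\alpha)$ near $0$ and bounded above and below away from $0$. Setting $\tilde S(\xi) := \sum_{j=1}^d \phi(\xi_j)$, the norm equivalences and the hypothesis give $S \asymp \tilde S$ on $\{\|\xi\|_p \le \delta\}$. On $\{\|\xi\|_p > \delta\}$, the pointwise bound $\sin^{-2}(\xi_k/2) \le C\,\xi_k^{-2}$ together with the integrability hypothesis $\int_{\|\xi\|_p > \delta} S(\xi)/\prod_k \xi_k^2\, d\xi < \infty$ shows that the ``far-from-origin'' contribution to $\int D_L\, S\, d\xi$ is $O(1)$, and the analogous bound for $\tilde S$ is immediate since $\tilde S$ is bounded there. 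Thus the leading-order asymptotics of $\Var(Q_{\CC_L}(X))$ coincide with those of $\int D_L\, \tilde S\, d\xi$ up to a bounded correction.

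By linearity, $\int D_L\, \tilde S\, d\xi = \sum_{j=1}^d \int D_L(\xi)\, \phi(\xi_j)\, d\xi$. For each $j$, the function $\phi(\xi_j)$, viewed as a structure function on $\Z^d$, satisfies (C1) with $\alpha_j = \alpha$ and $\alpha_i = 0$ for $i \neq j$ (since its behaviour as any subset of coordinates tends to zero depends only on whether $\xi_j$ itself tends to zero), and (C2) holds trivially as $\phi$ is bounded. Theorem \ref{lem:variancenonzerodge2} with $m_d = \alpha$ and $\tau_d = \mathbf{1}[\alpha = 1]$ then yields
\[
\int D_L(\xi)\, \phi(\xi_j)\, d\xi = \Theta\bigl((\log L)^{\mathbf{1}[\alpha = 1]}\, L^{d - \alpha}\bigr)
\]
uniformly in $j$. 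Summing over the $d$ terms preserves the order, and combining with the $O(1)$ far-field contribution gives the corollary.

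The main obstacle is the sandwiching step and the bookkeeping around the cutoff at $\|\xi\|_p = \delta$, in particular maintaining two-sided $\Theta$-estimates for $\int D_L\, S\, d\xi$ relative to $\int D_L\, \tilde S\, d\xi$ simultaneously in $L$ and ensuring that the far-field contribution is genuinely subdominant to the leading growth. A secondary, routine concern is confirming applicability of Theorem \ref{lem:variancenonzerodge2} to the axis-degenerate comparison function $\phi(\xi_j)$, where most of the exponents in (C1) vanish and the function depends essentially on a single coordinate.
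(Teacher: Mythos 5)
Your proposal is correct and follows essentially the same route as the paper: the paper also splits the spectral integral at $\|x\|_p=\delta$, controls the far field by the integrability hypothesis, and sandwiches $\|x\|_p^{\alpha}$ between $|x_1|^{\alpha}$ and $d^{\alpha/p}\sum_{j}|x_j|^{\alpha}$ so as to reduce to the product-form estimates of Theorem \ref{lem:variancenonzerodge2} with exponents $\alpha_1=\alpha$, $\alpha_2=\cdots=\alpha_d=0$. Your packaging via an explicit comparison function $\tilde S=\sum_j\phi(\xi_j)$ and linearity is only a cosmetic variant of the paper's direct two-sided inequalities.
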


Roughly speaking, we get Corollary \ref{prop:d=2} by putting $\alpha_1=\alpha$ and $\alpha_2=\cdots=\alpha_d=0$ in Theorem \ref{lem:variancenonzerodge2}.  


\subsubsection{\textbf{The effect of domain shape}}
An important consequence of our investigations is how the fluctuation exponent depends on shape of the growing domain (e.g., a ball vis-a-vis a cube). In fact, depending on the domain shape, even under relatively mild decay of the structure function, the fluctuations of the local mass can be bounded as the domain size grows to infinity. 

\subsubsection{\textbf{The anomaly of oscillating correlations}}
As seen in Theorem \ref{lem:covarianced} the spatial correlations of the \textit{field of local masses} (i.e., the masses in adjacent growing  cubes of similar sizes) exhibits a remarkable oscillating behaviour. To understand this phenomenon, we recall that hyperuniformity is often associated with repulsive interaction (or negatively correlated systems). Naturally, we expect this to be reflected in the spatial statistics of the field of local masses. E.g., for Coulomb systems in 3D, the leading order interactions  among local masses in adjacent cubes that meet in a face have been shown to be negative (c.f. \cite{lebowitz1983charge}, \cite{buckely-sodin}). This is in tune with the heuristic connection between hyperuniformity and negative dependence, and is similar in flavour to the $j=1$ case in Theorem \ref{lem:covarianced}.  However, Theorem \ref{lem:covarianced} goes further and unveils a more elaborate correlation landscape, depending on finer adjacency geometry of neighbouring domains for strongly hyperuniform systems. In fact, even the sign of the correlation can be positive or negative, depending on the dimension of the surface where two neighbouring domains intersect. 

In Section \ref{sec:explanation} , we explain this seemingly physically anomalous behaviour of the fluctuations (and the correlations of the local field of masses) from a microscopic statistical mechanical point of view, by showing that not only are these differential growth exponents and  oscillating signs of correlations consistent with each other, but also are necessary from a statistical physics perspective, and correspond naturally with the consideration of effects like Debye screening.

\subsection{Central limit theorem.} The next result show that $Q_{\BB_L}(X)$ is asymptotically normal, as $L\to \infty$, under appropriate conditions on the truncated correlation functions of $X$.
The $k$-th truncated correlation function $\rho_k^T$ of $X$ is defined by
\begin{align*}
\rho_k^T(i_1,\ldots, i_k)=\sum_{\pi\in \mathcal P(k)}(|\pi|-1)!(-1)^{|\pi|-1}\prod_{B\in \pi}\rho_B[i_1,\ldots, i_k],
\end{align*}
where $\mathcal P(k)$ denotes the set of all partitions of $\{1,\ldots, k\}$,  $|\pi|$ is the number of parts in the partition, $\rho_B[i_1,\ldots, i_k]=\rho_{|B|}(i_j\; :\; i_j\in B)$. 
The $k$-th intensity function $\rho_k$ of $X$ with respect to counting measure on $\Z^{kd}$ is given by
\begin{align*}
\rho_k(i_1,\ldots,i_k)=\E[X_{i_1}\cdots X_{i_k}], \mbox{ for } i_1,\ldots, i_k\in \Z^d.
\end{align*}
We elaborate further details of correlation and truncated correlation functions in Sections \ref{sec:cor} and \ref{sec:truncated}.

\begin{theorem}\label{thm:clt} Let $X=(X_i)_{i\in \Z^d}$ be a random field with structure function $S$ as in Theorem \ref{thm:varianceball}. Let $\rho^T(i_1,\ldots,i_k)$, for $i_1,\dots,i_k\in \Z^d$, be the truncated correlation functions of  $X$. Suppose  for each $k$ we have
\begin{align}\label{eqn:cltcondition}
	\sup_{i_1}\sum_{i_2,\ldots,i_n\in Z^d}\rho^T(i_1,\ldots,i_k)<\infty.
\end{align}
Let $\mathcal N(0,1)$ denote the standard normal distribution. Then 
\begin{align*}
	\frac{Q_{\BB_L}(X)-\E[Q_{\BB_L}(X)]}{\sqrt{\Var(Q_{\BB_L}(X))}}\to \mathcal N(0,1), \mbox{ as $L\to \infty.$}
\end{align*}
\end{theorem}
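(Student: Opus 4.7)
The plan is to establish the CLT via the method of cumulants. The central observation is that the truncated correlation function $\rho^T_k(i_1,\ldots,i_k)$ as defined in the statement coincides with the joint cumulant $\mathrm{cum}(X_{i_1},\ldots,X_{i_k})$: the partition-theoretic inclusion-exclusion formula in the definition is precisely the Möbius inversion that recovers cumulants from moments. Combined with the multilinearity of cumulants this yields the identity
\[
\kappa_k\bigl(Q_{\BB_L}(X)\bigr) \;=\; \sum_{i_1,\ldots,i_k \in \BB_L \cap \Z^d} \rho^T_k(i_1,\ldots,i_k),
\]
translating the hypothesis \eqref{eqn:cltcondition} directly into quantitative bounds on the cumulants of the local mass.

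First, I would use \eqref{eqn:cltcondition} (interpreted with absolute values inside the sum) to bound, for each fixed $k\ge 2$,
\[
\bigl|\kappa_k(Q_{\BB_L}(X))\bigr| \;\le\; |\BB_L \cap \Z^d| \cdot \sup_{i_1}\sum_{i_2,\ldots,i_k \in \Z^d}\bigl|\rho^T_k(i_1,\ldots,i_k)\bigr| \;=\; O(L^d).
\]
Next, writing $W_L = (Q_{\BB_L}(X) - \E[Q_{\BB_L}(X)])/\sqrt{\Var(Q_{\BB_L}(X))}$, the cumulants of $W_L$ satisfy $\kappa_1(W_L)=0$, $\kappa_2(W_L)=1$, and for $k\ge 3$,
\[
\kappa_k(W_L) \;=\; \frac{\kappa_k(Q_{\BB_L}(X))}{\Var(Q_{\BB_L}(X))^{k/2}} \;=\; O\!\left(\frac{L^d}{\Var(Q_{\BB_L}(X))^{k/2}}\right).
\]
Using the variance asymptotics of Theorem \ref{thm:varianceball}, in the standard regime $\Var(Q_{\BB_L}(X)) = \Theta(L^d)$ -- which holds, for instance, when the second-order truncated correlation $K$ is absolutely summable with $S(0)>0$ -- this ratio is $O(L^{d(1-k/2)}) \to 0$ as $L\to\infty$. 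Since the standard normal law is determined by its moments, convergence of all cumulants of $W_L$ to those of $\mathcal N(0,1)$ yields convergence in distribution by the classical method of moments.

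The principal obstacle is the cumulant bound in regimes where $\Var(Q_{\BB_L}(X))$ grows significantly slower than $L^d$, i.e.\ the strongly hyperuniform setting in which Theorem \ref{thm:varianceball} produces a rate of order $L^{d-1}$ or slower. In such cases the crude volume bound $|\kappa_k| = O(L^d)$ is not sharp enough to defeat the denominator $\Var(Q_{\BB_L}(X))^{k/2}$ for small $k$ (typically $k=3$ in low dimension). One must then exploit the geometric decay of $\rho^T_k$ rather than merely its summability, grouping tuples $(i_1,\ldots,i_k)$ according to their connected-cluster structure so that well-separated configurations contribute negligibly. A cluster-expansion-type refinement of the form $|\kappa_k(Q_{\BB_L}(X))| = O\bigl(\Var(Q_{\BB_L}(X))^{k/2-\eta}\bigr)$ for some $\eta>0$ is the natural target, and this quantitative improvement, rather than the abstract cumulant framework itself, is where the real analytic work lies.
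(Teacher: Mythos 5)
Your reduction to cumulants is exactly the paper's: the truncated correlations are the joint cumulants, multilinearity gives $\kappa_n(Q_{\BB_L}(X))=\sum_{i_1,\ldots,i_n\in\BB_L\cap\Z^d}\rho_n^T(i_1,\ldots,i_n)$, and hypothesis \eqref{eqn:cltcondition} yields $|\kappa_n(Q_{\BB_L}(X))|=O(L^d)$. The gap is in how you close the argument. You complete the proof only in the regime $\Var(Q_{\BB_L}(X))=\Theta(L^d)$ and explicitly leave the hyperuniform regime open, proposing a cluster-expansion improvement of the cumulant bound that you do not carry out. The missing idea is that no such improvement is needed: by Marcinkiewicz's theorem the normal law is the only distribution whose cumulant generating function is a polynomial, so it suffices to show that $\kappa_n(W_L)\to 0$ for all $n$ beyond some fixed finite order. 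The finitely many low-order cumulants that survive the crude bound do not have to be controlled at all, since any subsequential limit then has a polynomial cumulant generating function and is forced to be Gaussian, with unit variance because $\kappa_2(W_L)=1$.

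Concretely, the paper combines $|\kappa_n(Q_{\BB_L}(X))|\lesssim L^d$ with the lower bound $\Var(Q_{\BB_L}(X))\gtrsim L^{d-1}$ from Proposition \ref{lem:lowervariance} to obtain $\kappa_n(W_L)\lesssim L^{d-(d-1)n/2}$, which tends to $0$ precisely when $n>2d/(d-1)$: for all $n\ge 3$ when $d>3$, and for all $n\ge 5$ when $d=2,3$. The possibly non-vanishing third and fourth cumulants in low dimension are then absorbed by the Marcinkiewicz argument. So the ``real analytic work'' you anticipate (grouping tuples by cluster structure to beat the volume bound) is sidestepped entirely; what your proposal is missing is not a sharper estimate but this classical soft step, the main lemma of the Costin--Lebowitz/Soshnikov cumulant method.
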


Theorem \ref{thm:clt} in general, and  condition \eqref{eqn:cltcondition} in particular,  connects to the classical theory of particle number fluctuations, as developed in  \cite{MaY}, \cite{jancovici}, \cite{OL} and references therein. It is explained in \cite[P. 446]{MaY} that the condition \eqref{eqn:cltcondition}  holds for a class of one-dimensional
Coulomb systems (these systems have exponential clustering), \cite{edwards1962exact}, \cite{aizenman1980structure}. The notion
of clustering and its relation to the truncated $k$-point function in rigorously introduced in \cite{nazarov2012correlation}.


\subsection{Entropy and entropic degeneracy for constrained systems }
A key parameter of randomness, or the lack thereof, is that of entropy per unit volume (in other words, entropy per site). We can envisage this as the entropy per unit volume of the field restricted to a finite domain of space, considered in the limit as the domain size grows to cover all space. For a system to be deficient in randomness, one measure would be  its entropy per site to be degenerate in some appropriate sense. 

One angle from which to look at maximal rigidity for stealthy random fields would be to consider it from the perspective of the tail sigma field. In one dimension, the notion of maximal rigidity under spatial conditioning can be demonstrated to be equivalent to the fullness of the two-sided spatial tail sigma field. It may be mentioned here that the well-known Rokhlin-Sinai Theorem  \cite[p. 322]{glasner} connects  the spatial tail sigma field with the entropy per site.
However, an extremely important caveat to the Rokhlin-Sinai Theorem is that it demands the much stronger assumption of fullness of one-sided spatial tail sigma field (and not two-sided, as we have in the models of our interest in 1D). In fact, the one-sidedness of the tail sigma field is known to be necessary for  the Rokhlin-Sinai Theorem,  and the lack of this particular characteristic makes an approach via Rokhlin-Sinai not tenable in our setting. In particular, it compels us to undertake a direct investigation of the entropy per site of stealthy (and indeed, other spectrally constrained) stochastic systems, invoking results connected to disparate areas of classical analysis and probability theory.

We will investigate the question of entropic degeneracy in the context of random fields having joint densities, that is, for any finite set $\Lambda \subset \Z^d$, the finite collection of random variables $(X_{\mathbf{i}})_{\mathbf{i} \in \Lambda}$ has a joint density on $\R^{|\Lambda|}$. Next, we revisit the concept of entropy for such random variables.

Let $X$ be a continuous random variable with probability density function $f$. Then the entropy of $X$ is defined by 
\begin{align*}
	h(X)=\E[-\log f(X)]=-\int f(x)\log f(x)dx.
\end{align*}
   Let $\Lambda\subset \R^d$ and $\Lambda'=\Lambda \cap \Z^d$. The cardinality of $\Lambda'$ is denoted by $|\Lambda'|$. Define $X|_{\Lambda}:=\{X_i\; : \; i\in \Lambda'\}$. The entropy of $X|_{\Lambda}$ is defined by
\begin{align*}
	h(X|_{\Lambda})=-\int_{\R^{|\Lambda'|}} f_\Lambda(x)\log f_\Lambda(x)dx,
\end{align*} 
where $f_\Lambda$ denotes the joint density function of $X|_{\Lambda}$.  Denote $\Lambda_L:=\{Lx\; : \; x \in \Lambda\}$ for $L>0$. 
In the next result we assume that the boundary of $\Lambda$ is $2$-smooth with positive Gaussian curvature,  the Gaussian curvature of a boundary at a  point is the product of the principal curvatures. A boundary is said to be 2-smooth boundary if it is locally given by a level of a $2$-smooth function.

\begin{theorem}\label{lem:entropycontinuous}
	Let $X=(X_i)_{i\in \Z^d}$ be a real valued mean zero and variance one translation invariant random field with joint densities on finite domains. Let $\Lambda$ be a bounded connected domain in $\R^d$ with $2$-smooth boundary $\partial \Lambda$  each connected component of which has positive Gaussian curvature. Suppose $|\Lambda_L\cap \Z^d|=O(L^d)$ as $L \to \infty$.
	Then, if the structure function $S$ of $X$   vanishes near the origin and satisfies the Sobolev condition, $\sum_{j\in \Z^d}|\hat S(j)|, \sum_{j\in \Z^d}||j|\hat S(j)|^2<\infty$, 
	\begin{align*}
	\mathcal{H}(X):= \lim_{L \to \infty} \frac{h(X|_{\Lambda_L})}{|\Lambda_L\cap \Z^d|} =  -\infty.
	\end{align*}
	
\end{theorem}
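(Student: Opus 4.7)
The plan is to bound the entropy from above by its Gaussian counterpart and then drive this bound to $-\infty$ via a multi-dimensional analog of Szeg\H{o}'s first limit theorem for Toeplitz determinants. Let $\Sigma_L:=(K(i-j))_{i,j\in\Lambda_L'}$ denote the covariance matrix of $X|_{\Lambda_L}$, which is the $|\Lambda_L'|\times|\Lambda_L'|$ section on $\Lambda_L'$ of the multi-dimensional Toeplitz operator on $\Z^d$ with symbol $S$. Since the Gaussian distribution maximizes differential entropy subject to a prescribed covariance,
\[
h(X|_{\Lambda_L})\;\le\;\tfrac{|\Lambda_L'|}{2}\log(2\pi e)+\tfrac{1}{2}\log\det\Sigma_L,
\]
and the hypothesis $|\Lambda_L'|=O(L^d)$ reduces the task to showing that $|\Lambda_L'|^{-1}\log\det\Sigma_L\to-\infty$ as $L\to\infty$.

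For $\epsilon>0$, let $\Sigma_L(\epsilon):=\Sigma_L+\epsilon I$; this is the Toeplitz section associated with the strictly positive symbol $S+\epsilon$, since a constant symbol $\epsilon$ generates $\epsilon I$. Monotonicity of $\det$ in the positive semidefinite ordering yields $\log\det\Sigma_L\le\log\det\Sigma_L(\epsilon)$. Because $S+\epsilon$ is bounded and bounded below by $\epsilon>0$, and because $\partial\Lambda$ is $2$-smooth with positive Gaussian curvature, one can invoke the multi-dimensional analog of Szeg\H{o}'s first limit theorem for Toeplitz sections over growing geometrically regular domains (in the spirit of Linnik, Widom, and subsequent refinements) to conclude
\[
\lim_{L\to\infty}\frac{1}{|\Lambda_L'|}\log\det\Sigma_L(\epsilon)\;=\;\frac{1}{(2\pi)^d}\int_{\mathbb T^d}\log(S(\theta)+\epsilon)\,d\theta.
\]

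Let $G\subset\mathbb T^d$ denote the open gap on which $S$ vanishes, so $|G|>0$ by hypothesis. Decomposing,
\[
\int_{\mathbb T^d}\log(S+\epsilon)\;=\;|G|\log\epsilon+\int_{\mathbb T^d\setminus G}\log(S+\epsilon),
\]
where the second summand is bounded above uniformly for $\epsilon\in(0,1]$ via the elementary bound $\log(S+\epsilon)\le S+\epsilon-1\le S$ combined with $S\in L^1(\mathbb T^d)$. Hence $\int\log(S+\epsilon)\to-\infty$ as $\epsilon\to 0^+$, and combining with the preceding displays,
\[
\limsup_{L\to\infty}\frac{h(X|_{\Lambda_L})}{|\Lambda_L'|}\;\le\;\tfrac{1}{2}\log(2\pi e)+\frac{1}{2(2\pi)^d}\int_{\mathbb T^d}\log(S+\epsilon)\;\xrightarrow[\epsilon\to 0^+]{}\;-\infty,
\]
as desired.

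The main obstacle is justifying the multi-dimensional Szeg\H{o}-type limit in the second display: the one-dimensional version is classical, but for Toeplitz sections indexed by growing higher-dimensional domains $\Lambda_L'\subset\Z^d$ one must control the surface contribution to $\log\det$, which is precisely where the $2$-smoothness and positive Gaussian curvature hypotheses on $\partial\Lambda$ enter. Once this geometric Szeg\H{o} theorem is in hand, the remaining work -- the Gaussian maximum entropy bound, the monotone regularization $S\leadsto S+\epsilon$, and the legitimate interchange of the limits $L\to\infty$ and $\epsilon\to 0^+$ -- is standard, so the multidimensional Szeg\H{o} theorem is the technical heart of the argument.
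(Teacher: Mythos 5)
Your proposal is correct and follows essentially the same route as the paper: bound $h(X|_{\Lambda_L})$ by the Gaussian entropy $\tfrac{|\Lambda_L'|}{2}\log(2\pi e)+\tfrac12\log\det\Sigma_L$, regularize via $\Sigma_L+\epsilon I$ (equivalently the symbol $S+\epsilon$), apply Linnik's multidimensional Szeg\H{o} theorem for sections over smooth positively curved domains, and let $\epsilon\to 0^+$ so that the gap in $S$ forces $\int\log(S+\epsilon)\to-\infty$. The only cosmetic difference is that the paper phrases the first step as comparison with the Gaussian field of the same covariance and then computes its entropy explicitly, which yields the identical bound.
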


\begin{remark} \label{rem:entropy}
More generally, the proof of Theorem \ref{lem:entropycontinuous} demonstrates that 
entropic degeneracy is much more general than the vanishing of the structure function near the origin, and sets in as soon as the structure function fails to be logarithmically integrable. 
\end{remark}

%
%


\section{Statistical physics connections}
\subsection{Hyperuniformity}
Hyperuniform (also known as super homogeneous) processes are statistical mechanical systems that exhibit a higher level of uniformity than processes that can be considered to be purely random. In the domain of random point fields, the role of ``pure randomness'' is played by the Poisson process, which entails that points in disjoint spatial domains are statistically independent of each other. To the contrary, hyperuniform processes exhibit strong spatial correlation, which in particular acts to provide a measure of regularity that is noticably higher than the Poisson process. Thus, hyperuniform systems lie somewhere in between purely random and purely crystalline states of matter, which explains the interest in them from the perspective of condensed matter physics. A large gamut of literature has emerged in recent years that address the investigation of such systems, see e.g. \cite{AiMa},\cite{MaY}, \cite{GoLSp}, \cite{GhL-CMP}, \cite{ToSt}, \cite{MaST}, \cite{JiLHMCT}, \cite{JiT}, \cite{FlTS}, \cite{BuFN}, \cite{HaMS}, \cite{DeSMRPRJTB},\cite{HeL}, \cite{HeCL}, to provide a partial list. For an overview of this fairly large body of literature, we refer the interested reader to \cite{To-1}, \cite{GhL-survey}, and the references therein.

An important aspect of hyperuniform point processes is the  fluctuations of the particle count in a large domain of space. For the Poisson process, this  scales like the volume, while for a hyperuniform system it grows  slower than the volume, e.g., it may scale like the surface area (or even slower) of the domain. Hyperuniform processes cover a wide class of examples of natural statistical mechanical systems, principal among them being (one component) Coulomb systems, determinantal processes, and their derivatives. Hyperuniform processes arise naturally in the investigation of spectrally constrained random fields. For translation invariant processes, hyperuniformity can be shown to be equivalent to the vanishing of the spectrum at the origin of the frequency domain, and thus spectral considerations are naturally motivated in the study of such systems (see, e.g., \cite{GhL-CMP}, \cite{GhL-survey}, \cite{baake10}). 

\subsection{Stealthy hyperuniform systems}
As alluded to earlier, an important category of processes with spectral constraints   is that of stealthy hyperuniform processes (SHP). Originating in the study of random point fields, these processes pertain to the situation where the  spectrum  vanishes in a neighbourhood of the origin. The nomenclature ``stealthy'' originates from the fact that such a point configuration is invisible to diffraction experiments involving frequencies that fall in the ``spectral gap''. Stealthy hyperuniform processes have been the subject of intensive investigations in the recent past, see e.g. \cite{ToZS}, \cite{ZhST-2}, \cite{ZhST-1}, \cite{ZhST-3}, \cite{ChDZCT}, \cite{GhL-CMP} for a partial list, and the references therein . SHP are naturally hyperuniform,  they are a natural class of models for investigation under the ambit of spectrally constrained stochastic systems. 

In \cite{GhL-CMP}, a rigorous mathematical investigation of SHP was undertaken. In fact, most of the results therein are applicable to a wider class of models, which the authors referred to as generalized stealthy processes. These are translation invariant random fields (or more generally, random measures) whose  spectrum vanishes in some open subset of the frequency domain (significantly relaxing the requirement that the ``spectral gap'' be a neighbourhood of the origin). An important theme of the results in \cite{GhL-CMP} is a very high degree of ``orderliness'' exhibited by SHP. This can be observed, for instance, in the bounded holes conjecture of Torquato, Zhang and Stillinger, which was established in the affirmative in \cite{GhL-CMP}. This result entails that the ``holes'' (i.e., regions in the physical space that are devoid of particles), are at most of a deterministically fixed size. Moreover, it was shown that this deterministic upper bound on hole sizes in inversely proportional to the size of the spectral gap. 

\subsection{Maximal Rigidity and its consequences}
The most intriguing property of generalized stealthy systems, established in \cite{GhL-CMP}, is perhaps the result that such systems exhibit ``maximal rigidity''. That is, the exact configuration (in the case of particle systems) or the exact realisation of the random field / random measure , when restricted to a bounded domain of the physical space, is a deterministic function of the configuration (realisation of the random field) outside the domain. This caps a fairly long line of work on ``rigidity phenomena'' in random point fields (\cite{Gh}, \cite{GhP} \cite{GhL-JSP}, \cite{Bu}, \cite{BuQi}, \cite{NiKi}),  which entails that certain statistics of  local particle configurations  (like local mass, local center of mass, etc) are degenerate (that is, non-random) under spatial conditioning. With the natural understanding that the complete determination (or degeneracy) of the field under spatial conditioning is justifiably referred to as maximal rigidity, SHP form an important class of models from this perspective.

The true physical interpretation or implication of the maximal rigidity exhibited by stealthy systems is not well-understood. 
Rigidity  under spatial conditioning is one of several possible ways to address the question of statistical degeneracy in a spatial system, and it naturally begs the question, exactly how degenerate are stealthy random fields ? While it might be tempting to contemplate a complete or nearly complete lack of randomness (in some appropriate sense), a cautionary example is provided by the class of stealthy Gaussian random fields. It is known from classical Gaussian process theory that thanks to the Bochner-Khinchine Theorem \cite[Theorem 4.2.2]{lukacs}, there is a one-to-one correspondence between translation invariant Gaussian random fields and non-negative spectral measures. Using this dictionary, a Gaussian random field is stealthy as soon as the spectral measure vanishes on some neighbourhood of the frequency domain - a fairly mild condition in the context of the Bochner-Khinchine theorem, thereby ensuring that a vast category of Gaussian random fields are, in fact, stealthy. However, the mildness of this constraint in view of Bochner-Khinchine also guarantees at the same time that these processes can be hardly viewed to be devoid of or lacking in randomness in any significant sense. 

\subsection{Spectral constraints and measuring the lack of randomness}
In view of these considerations, the question of describing the nature of the ``lack of randomness'' in stealthy processes becomes an intriguing and challenging one. In this work, we investigate various aspects of stealthy random fields that touch upon this question. More generally, we  extend our investigation to spectrally constrained random fields that are not stealthy but exhibit hyperuniformity in the sense of a vanishing  spectrum at some point (usually the origin in the frequency space). Taking a refined view-point, we investigate the degree of such spectral vanishing, particularly with regard to  its consequences for statistical constraints on the random field in the physical space. In our investigations, we focus attention on two classical measures of orderliness in  random processes, namely fluctuations and entropy. 

\section{Comparison of fluctuations}\label{sec:explanation}
Let $A_{\delta}=\{x\in \R\; :\; \mbox{there exists $i$ such that  $|x_i|<\delta$}\}$ for $\delta>0$. Suppose $S(x)=0$ when $x\in A_{\delta}$ for some $\delta>0$, and $S$ is bounded function in $[-\pi,\pi]^d$. Then it is clear that  $\int \frac{S(x)}{x_1^2\cdots x_d^2}dx<\infty.$ Therefore Theorem \ref{lem:covarianced} implies that 
\begin{align}\label{eqn:box}
	\lim_{L\to\infty}\Var(Q_{ \CC_{L}}(X))<\infty.
\end{align}
On the other hand, the same assumption on $S$ implies that there exists $L_0$ such that  $\int_{\|\xi\|\le \frac{c}{L}}S(\xi)d\xi=0$ and $\int_{\|\xi\|>\frac{c}{L}}\frac{S(\xi)}{\|\xi\|^{d+1}}d\xi<\infty$ for $L>L_0$. Therefore Proposition \ref{lem:lowervariance} implies that
\begin{align}\label{eqn:ball}
	\Var(Q_{\BB_L}(X))\gtrsim  L^{d-1}.
\end{align}
In this section we give an intuitive explanation for the different behaviour observed in \eqref{eqn:box} and \eqref{eqn:ball} for $d=2$. See Figure \ref{fig;1}.

\begin{figure}
\begin{center}
	\includegraphics[width=0.7\textwidth]{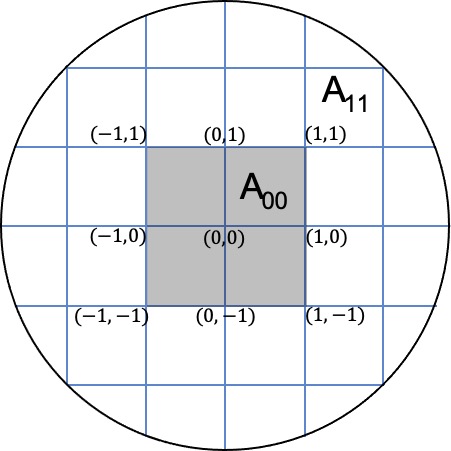}
	\caption{Each box in the shaded area is surrounded by $8$-complete boxes, other boxes do not satisfy this condition.}\label{fig;1}
\end{center}
\end{figure}

To see this phenomenon, we first divide the ball of radius of $L$ in grids of length $\sqrt L$, as shown in the figure. We denote
\[
A_{(k,l)}=[k\sqrt L, (k+1)\sqrt L]\times [\ell \sqrt L, (\ell+1)\sqrt L], \mbox{ for $k,\ell \in \Z$}.
\]
Note that $A_{(k,\ell)}$ are  squares with side lengths $\sqrt L$.  Let $\vp_L=\one_{\BB_L}$. Then 
\begin{align*}
	Q_{\BB_{L}}(X)=\sum_{k,\ell}Y_{k,\ell},\mbox{ where $Y_{k,\ell}=\sum\limits_{i\in A_{k,\ell}\cap \BB_L}X_i$}.
\end{align*}
If $A_{k,\ell}\cap \BB_L=\emptyset$ then $Y_{k,\ell}=0$. Therefore 
\begin{align*}
	Q_{\BB_{L}}(X)^2=\sum_{k,\ell}\sum_{p,q}Y_{k,\ell}Y_{p,q}.
\end{align*}
Since $\E[	Q_{\BB_{L}}(X)]=0$, we have
\begin{align*}
	\Var(Q_{ \BB_{L}}(X))=\sum_{k,\ell}\sum_{p,q}\E[Y_{k,\ell}Y_{p,q}].
\end{align*}
Now consider the term when $k=0$ and $\ell=0$, i.e.,
$
\sum_{p,q}\E[Y_{0,0}Y_{p,q}].
$
Roughly speaking, Theorem \ref{lem:covarianced} implies that, for large $L$, 
\[
\E[Y_{0,0}Y_{p,q}]\approx  0\mbox{ if $\|(0,0),(p,q)\|_{\infty}\ge 2$.}
\]
Therefore  we have
\begin{align*}
	\sum_{p,q}\E[Y_{0,0}Y_{p,q}]&\approx\sum_{|p|, |q|\le 1} \E[Y_{0,0}Y_{p,q}]
	\\&=\E[Y_{0,0}Y_{0,0}]+4\E[Y_{0,0}Y_{0,1}]+4\E[Y_{0,0}Y_{1,1}]
	\\&=\sigma^2-4\frac{\sigma^2}{2}+4\frac{\sigma^2}{4}=0.
\end{align*}
Let $Z_0=\{(k,\ell)\in \Z^2 \; :\;  A_{k,\ell} \mbox{ is surrounded by $8$ complete boxes}\}$.  For example in Figure \ref{fig;1}, each shaded box in the figure is surrounded by $8$ complete boxes. But, the boxes which are not shaded do not satisfy this condition. Then, for  $(k,\ell)\in Z_0$, we have
\begin{align*}
	\sum_{p,q}\E[Y_{k,\ell}Y_{p,q}]\approx 0.
\end{align*}
Let $Z_1=\{(k,\ell)\in \Z^2\; :\; A_{k,\ell}\cap \BB_L\neq \emptyset\}$. It is clear that if $(k,\ell)\in Z_1\backslash Z_0$ then 
\[
\sum_{p,q}\E[Y_{k,\ell}Y_{p,q}]\neq 0.
\]
Again observe that $|Z_1\backslash Z_0|\approx L$. Which shows that $\Var(	Q_{\BB_{L}}(X))$ is growing like $L$, not constant.

Carrying out a similar analysis for an $L\times L$ square shows that only the corner $\sqrt L \times \sqrt L $ squares contribute. Thus  $\Var(Q_{\CC_L})=4\times \frac{\sigma^2}{4}=\sigma^2$. The same is true for rectangles with sides of lengths proportional to $L$.

\section{Proof of Theorem \ref{thm:varianceball}}
This section is dedicated to prove Theorem \ref{thm:varianceball}. The following lemma will be used repeatedly in the paper.

\begin{lemma}\label{ft:formula}
	Let $X=(X_i)_{i\in \Z^d}$ be a translation invariant random field with covariance kernel $K$. Suppose $S$ is  the Fourier transform of the covariance kernel $K$. Then 
	\begin{align*}
		\Var(\vp(X))&=\frac{1}{(2\pi)^d}\int {|\hat{\vp}(\zeta)}|^2S(\zeta)d\zeta \;\mbox{  and }\; \\\cov(\vp(X),\psi(X))&=\frac{1}{(2\pi)^d}\int {\hat{\vp}(\zeta)}\bar{\hat\psi(\zeta)}S(\zeta)d\zeta,
	\end{align*}
	where $\vp$ and $\psi$ are two functions in  $(L^1\cap L^2)(\R^d)$, and $\vp(X)=\sum_{i\in \Z^d}\vp(i)X_i$. The $\hat \vp$ and $\hat \psi$ are the Fourier transforms of $\vp$ and $\psi$ respectively.
\end{lemma}

\begin{proof}[Proof of Lemma \ref{ft:formula}]
	Recall $\vp(X)=\sum_{i\in Z^d}\vp(i)X_i$. Therefore we have
	\begin{align*}
		\Var(\vp(X))=\sum_{p,q}\vp(p)\overline{\vp(q)}\E[X_pX_q]=\sum_{p,q}\vp(p)\overline{\vp(q)}K(p-q).
	\end{align*}
	By the definition of $S$ we have $K(p)=\hat S(p)$ for $p\in \Z^d$, i.e., 
	\begin{align*}
		K(p)=\frac{1}{(2\pi)^d}\int_{[-\pi,\pi]^d} S(\zeta)e^{-i\zeta\cdot p}d\theta.
	\end{align*}
	Therefore we get
	\begin{align*}
		\Var(\vp(X))&=\frac{1}{(2\pi)^d}\sum_{p,q}\vp(p)\overline{\vp(q)}\int_{[-\pi,\pi]^d} S(\zeta)e^{-i\zeta\cdot (p-q)}d\zeta
		\\&=\frac{1}{(2\pi)^d}\int_{[-\pi,\pi]^d}S(\zeta)\sum_{p,q}\vp(p)\overline{\vp(q)}e^{-i\zeta\cdot (p-q)}d\zeta
		\\&=\frac{1}{(2\pi)^d}\int_{[-\pi,\pi]^d}S(\zeta)\l(\sum_{p}\vp(p)e^{-i\zeta\cdot p}\r)\l(\overline{\sum_{q}\vp(q)e^{-i\zeta\cdot q}}\r)d\zeta
		\\&=\frac{1}{(2\pi)^d}\int S(\zeta)|\hat\vp(\zeta)|^2d\zeta.
	\end{align*}
	Similarly it can be shown, we skip the details here, that 
	\begin{align*}
		\cov(\vp(X),\psi(X))=\frac{1}{(2\pi)^d}\int \hat\vp(\zeta)\overline{\hat\psi(\zeta)}S(\zeta)d\zeta.
	\end{align*}
	Hence the result.
\end{proof}

We write $f(L)\lesssim g(L)$ for all $L$ if and only if there exists a constant $C$ such that $f(L)\le C g(L)$ for all $L$.  We have the following result. 
\begin{proposition}\label{lem:varianceforball}
	For large $L$,
	\begin{align*}
	\Var(Q_{\BB_{L}}(X))\lesssim L^{2d}\int_{\|\xi\|\le \frac{c}{L}}S(\xi)d\xi + L^{d-1}\int_{\|\xi\|>\frac{c}{L}}\frac{S(\xi)}{\|\xi\|^{d+1}}d\xi,
	\end{align*}
	for some positive constant $c$.
\end{proposition}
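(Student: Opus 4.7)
The plan is to express the variance as a Fourier-type integral against the squared Fourier transform of the ball indicator, and then split the integration according to the magnitude of the frequency. By translation invariance of the covariance and Fourier inversion,
\begin{align*}
\Var(Q_{\BB_L}(X)) = \frac{1}{(2\pi)^d}\int_{[-\pi,\pi]^d} S(\xi)\,|D_L(\xi)|^2\,d\xi, \qquad D_L(\xi) := \sum_{j \in \BB_L \cap \Z^d} e^{i\xi \cdot j}.
\end{align*}
The proposition will then follow from the pointwise frequency-domain estimate
\begin{align*}
|D_L(\xi)|^2 \lesssim \min\!\l(L^{2d},\ \frac{L^{d-1}}{\|\xi\|^{d+1}}\r), \qquad \xi \in [-\pi,\pi]^d,
\end{align*}
by splitting at $\|\xi\| = c/L$: on $\{\|\xi\| \le c/L\}$ the trivial bound yields the first term $L^{2d}\int_{\|\xi\| \le c/L} S(\xi)d\xi$, while on $\{\|\xi\| > c/L\}$ the decay bound produces the second term $L^{d-1}\int_{\|\xi\| > c/L}S(\xi)/\|\xi\|^{d+1}d\xi$.

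The first estimate $|D_L(\xi)|^2 \lesssim L^{2d}$ is immediate from $|D_L(\xi)| \le |\BB_L \cap \Z^d| \lesssim L^d$. For the Fourier decay estimate, I would compare $D_L$ to the continuous Fourier transform of the ball indicator, $\widehat{\one_{\BB_L}}(\xi) = (2\pi L/\|\xi\|)^{d/2} J_{d/2}(L\|\xi\|)$, whose modulus is classically controlled via the Bessel asymptotic $J_\nu(r) = O(r^{-1/2})$, yielding $|\widehat{\one_{\BB_L}}(\xi)|^2 \lesssim L^{d-1}/\|\xi\|^{d+1}$ once $L\|\xi\| \ge c$. Transfer to the discrete sum can be effected through the Poisson summation identity $D_L(\xi) = \sum_{k \in \Z^d}\widehat{\one_{\BB_L}}(2\pi k - \xi)$, by arguing that the $k=0$ term controls the sum for $\xi$ in the fundamental domain $[-\pi,\pi]^d$, since $\|2\pi k - \xi\| \ge \pi$ whenever $k \ne 0$.

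The main technical obstacle is precisely this last step: verifying that the Poisson tail $\sum_{k \ne 0}\widehat{\one_{\BB_L}}(2\pi k - \xi)$ does not spoil the envelope. Since the absolute-value bounds $L^{(d-1)/2}/\|2\pi k - \xi\|^{(d+1)/2}$ fail to be summable in $k$ when $d \ge 2$, one must exploit the oscillation of the Bessel factor -- for example by pairing antipodal lattice modes or via a smooth cutoff of the ball indicator near its boundary -- to show that the aggregate contribution of $k \ne 0$ remains of the same order $L^{(d-1)/2}/\|\xi\|^{(d+1)/2}$ as the principal term. Once this geometric Fourier decay for $D_L$ is secured, the two pointwise bounds on $|D_L(\xi)|^2$ are substituted in their respective sub-regions and the stated inequality follows by direct integration.
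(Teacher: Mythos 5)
Your core strategy coincides with the paper's: write the variance as $\int S(\xi)\,|\hat{\vp}_L(\xi)|^2\,d\xi$, bound the Fourier factor trivially by $L^{2d}$ near the origin and by $L^{d-1}\|\xi\|^{-(d+1)}$ away from it via the Bessel asymptotic $J_{d/2}(r)=O(r^{-1/2})$, and split the integral at $\|\xi\|=c/L$. The difference is that you work with the lattice exponential sum $D_L(\xi)=\sum_{j\in\BB_L\cap\Z^d}e^{i\xi\cdot j}$ and then try to import the continuous Bessel decay into $D_L$ through Poisson summation; you correctly note that the naive absolute-value bound on the tail $\sum_{k\neq 0}\widehat{\one_{\BB_L}}(2\pi k-\xi)$ is not summable for $d\ge 2$, and you leave that step open. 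That is a genuine gap in your route: controlling this tail is essentially the lattice-point discrepancy problem for dilated balls (Poisson summation for a discontinuous indicator is itself only conditionally convergent here), and ``exploit the oscillation of the Bessel factor'' is the statement of the difficulty rather than its resolution. The paper does not confront this issue at all: its variance identity (Fact \ref{ft:formula}) is applied directly with the continuous Fourier transform $\hat{\vp}_L(\xi)=c_dL^{d/2}\|\xi\|^{-d/2}J_{d/2}(L\|\xi\|)$ of $\one_{\BB_L}$, from which the two pointwise bounds are read off with no lattice correction. (One may object that the appendix proof of Fact \ref{ft:formula} actually produces the discrete sum $\sum_p\vp(p)e^{-i\zeta\cdot p}$, so the continuous-versus-discrete identification you are worried about is implicitly present in the paper too; but as the argument is intended, the proposition is established with the continuous transform and your ``main technical obstacle'' simply does not arise.) So: right decomposition and right two-regime estimate, but your proof as written is incomplete precisely at the step you flag, and the economical fix is to bypass the discrete sum entirely, as the paper does.
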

Note that there is no assumption on $S$ for the upper bound.
The next result shows that the upper bound is tight under a mild condition on $S$. 
\begin{proposition}\label{lem:lowervariance}
	Suppose $S$ satisfies the condition as in Theorem \ref{thm:varianceball}.	Then, 
	\begin{align*}
	\Var(Q_{\BB_{L}}(X))\gtrsim L^{2d}\int_{\|\xi\|\le \frac{c}{L}}S(\xi)d\xi + L^{d-1}\int_{\|\xi\|>\frac{c}{L}}\frac{S(\xi)}{\|\xi\|^{d+1}}d\xi, \mbox{ for large $L$,}
	\end{align*}
	for some positive constant $c$.
\end{proposition}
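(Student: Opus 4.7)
The plan is to start from the Parseval identity
\[
\Var(Q_{\BB_L}(X)) = \frac{1}{(2\pi)^d} \int_{\mathbb{T}^d} |D_L(\theta)|^2 S(\theta)\, d\theta,
\]
where $D_L(\theta) := \sum_{i \in \BB_L \cap \Z^d} e^{-i\theta\cdot i}$, and to lower-bound the integrand in two complementary frequency regions. By Poisson summation, $D_L(\theta)$ agrees with the continuous ball transform $\widehat{\I{\BB_L}}(\theta) = c_d L^{d/2} J_{d/2}(L\|\theta\|) \|\theta\|^{-d/2}$ up to lower-order corrections that are harmless under the hypothesis that $S$ is bounded on $\mathbb{T}^d$; I will therefore work directly with $\widehat{\I{\BB_L}}$ and recover the discrete statement at the end. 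This already tells us that what Proposition \ref{lem:varianceforball} established as a pointwise upper bound must be tight only in an averaged sense, because of the oscillatory nature of the Bessel factor.

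Next, split the domain into an inner piece $\|\theta\| \le c/L$ and an outer piece $c/L < \|\theta\| \le \pi$, with $c>0$ a small constant to be fixed. On the inner piece, the small-argument expansion $J_{d/2}(s) \sim c_d' s^{d/2}$ gives $|\widehat{\I{\BB_L}}(\theta)|^2 \gtrsim L^{2d}$ uniformly for $\|\theta\| \le c/L$ once $c$ is small enough, producing
\[
\int_{\|\theta\| \le c/L} |\widehat{\I{\BB_L}}(\theta)|^2 S(\theta)\, d\theta \gtrsim L^{2d} \int_{\|\xi\| \le c/L} S(\xi)\, d\xi.
\]
On the outer piece I pass to spherical coordinates, using that $|\widehat{\I{\BB_L}}|^2$ is radial:
\[
\int_{c/L < \|\theta\| \le \pi} |\widehat{\I{\BB_L}}(\theta)|^2 S(\theta)\, d\theta = \int_{c/L}^{\pi} r^{d-1} |\widehat{\I{\BB_L}}|^2(r) \int_{\S^{d-1}} S(r\omega)\, dV_{\S^{d-1}}(\omega)\, dr.
\]
The regularity hypothesis yields $\int_{\S^{d-1}} S(r\omega)\,dV_{\S^{d-1}}(\omega) \asymp \wA(r)$ on some neighborhood $[0,\eps]$, and the large-argument Bessel asymptotic $J_{d/2}(s)^2 \sim \frac{2}{\pi s}\cos^2(s - \phi_d)$ gives $|\widehat{\I{\BB_L}}|^2(r) \asymp L^{d-1} r^{-(d+1)} \cos^2(Lr - \phi_d)$ for $r$ in the outer range. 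Partitioning $[c/L,\eps]$ into consecutive half-periods of length $\pi/L$, on each sub-interval the factor $\wA(r)/r^2$ is, by the monotonicity hypothesis, uniformly comparable to its endpoint values, while $\int \cos^2(Lr - \phi_d)\,dr = \pi/(2L)$ on each period. Summing over sub-intervals yields
\[
\int_{c/L}^{\eps} r^{d-1} |\widehat{\I{\BB_L}}|^2(r)\, \wA(r)\, dr \gtrsim L^{d-1} \int_{c/L}^{\eps} \frac{\wA(r)}{r^2}\, dr,
\]
which, upon converting back to Cartesian coordinates and using $\wA \asymp$ (spherical average of $S$), matches the second term of the stated lower bound; the range $r \in [\eps,\pi]$ is handled crudely using the boundedness of $S$ and contributes at worst $O(L^{d-1})$, which is absorbed into what has already been produced.

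The essential obstacle is that $|\widehat{\I{\BB_L}}|^2$ has infinitely many zeros (on concentric spheres corresponding to Bessel roots) and therefore admits no useful pointwise lower bound matching the pointwise upper bound used in Proposition \ref{lem:varianceforball}. The period-averaging step is the crux, and the monotonicity of $\wA(r)/r^2$ demanded by the regularity hypothesis is exactly the structural feature that rescues it: without such a monotone envelope, the angularly averaged spectral density could in principle conspire with the Bessel oscillations to produce arbitrary cancellations. A secondary technicality is the joint choice of the cutoff constant $c$, which must be small enough for the small-argument Bessel estimate to yield the inner-region lower bound yet large enough that the first oscillation period is comfortably contained in the outer region; this can be balanced in any fixed dimension by working with separate constants in the two regions and absorbing the intermediate band into the inner term.
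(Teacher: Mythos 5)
Your proposal follows essentially the same route as the paper's proof: the Parseval-type identity from Fact \ref{ft:formula}, the split of the frequency domain at $\|\xi\| = c/L$, the $L^{2d}$ lower bound on the inner region from the small-argument behaviour of $J_{d/2}$, and the period-averaging of $\cos^2(L r - \phi)$ over intervals of length $\pi/L$ against the monotone envelope $\wA(r)/r^{2}$ supplied by the regularity hypothesis on the outer region, with the range $\|\xi\|>\eps$ absorbed via the boundedness of $S$. The only cosmetic differences are that you obtain the inner bound from the power-series expansion of the Bessel function where the paper uses the integral representation of $\hat{\varphi}_L$, and that you explicitly flag the discrete-versus-continuous Fourier transform identification, which the paper leaves implicit.
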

\begin{proof}[Proof of Theorem \ref{thm:varianceball}] 
	The result follows from Propositions \ref{lem:varianceforball} and \ref{lem:lowervariance}.
\end{proof}

The rest of this section is dedicated to prove Propositions \ref{lem:varianceforball} and \ref{lem:lowervariance}. The Bessel functions play a crucial role in proving the propositions. We first  recall the definition of Bessel functions.  If $\Re(\nu)>-\frac{1}{2}$ then $J_{\nu}(z)$, the {\it  Bessel function of order $\nu$},  is defined (see \cite[p. 128]{epstein}) by the integral 
\begin{align}\label{eqn:besselfn}
J_{\nu}(z)=\frac{(\frac{z}{2})^{\nu}}{\Gamma(\nu+\frac{1}{2})\Gamma(\frac{1}{2})}\int_{0}^{\pi}e^{iz\cos \theta}\sin^{2\nu}(\theta)d\theta.
\end{align}

%

\begin{proposition}\label{prop:fouriertransformofball}
	{\it Suppose $\vp_{L}=\one_{\BB_{L}}$. Then
		\begin{align}\label{eqn:reducebessel}
		\hat{\vp}_L(\xi)=c_d'L^d\int_{0}^{\pi}e^{iL\|\xi\|\cos \theta}\sin^{d}(\theta )d\theta,
		\end{align}
		where $c_d'$ is a constant depending on $d$.}
\end{proposition}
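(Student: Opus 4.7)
The plan is to recognize $\varphi_L = \mathbf 1_{\BB_L}$ as a radial function on $\R^d$ and compute its Fourier transform in two stages: first via the Bessel transform formula for radial functions (Result \ref{res:fouriertransformofradialfun}), and then by unfolding the resulting Bessel function through its integral representation \eqref{eqn:besselfn}. The end goal is to reach the single $\theta$-integral displayed in \eqref{eqn:reducebessel}, modulo a constant $c_d'$ depending only on $d$.

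First I would set $F(r)=\I{[0,L]}(r)$ so that $\varphi_L(x)=F(\|x\|)$, and apply Result \ref{res:fouriertransformofradialfun} to write
\[
\hat\varphi_L(\xi)=\frac{c_d}{\|\xi\|^{(d-2)/2}}\int_0^{L} J_{(d-2)/2}(r\|\xi\|)\, r^{d/2}\,dr.
\]
Next, to evaluate the $r$-integral explicitly, I would substitute $u=r\|\xi\|$ and invoke the classical Bessel identity $\tfrac{d}{du}\bigl(u^{\nu}J_{\nu}(u)\bigr)=u^{\nu}J_{\nu-1}(u)$ with $\nu=d/2$, giving
\[
\int_0^{L\|\xi\|} u^{d/2}\,J_{d/2-1}(u)\,du=(L\|\xi\|)^{d/2} J_{d/2}(L\|\xi\|).
\]
Collecting factors of $\|\xi\|$ and $L$ then yields the intermediate identity
\[
\hat\varphi_L(\xi)=c_d\, L^{d/2}\,\|\xi\|^{-d/2}\,J_{d/2}(L\|\xi\|).
\]

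To arrive at the desired form, I would now insert the integral representation \eqref{eqn:besselfn} specialized to $\nu=d/2$ (so that $2\nu=d$ and the exponent of $\sin\theta$ is exactly $d$),
\[
J_{d/2}(L\|\xi\|)=\frac{(L\|\xi\|/2)^{d/2}}{\Gamma((d+1)/2)\,\Gamma(1/2)}\int_{0}^{\pi} e^{iL\|\xi\|\cos\theta}\,\sin^{d}\theta\,d\theta.
\]
Substituting this back, the factor $(L\|\xi\|)^{d/2}$ cancels the $\|\xi\|^{-d/2}$ and combines with $L^{d/2}$ to produce $L^d$, while all remaining prefactors are absorbed into a single dimension-dependent constant $c_d'=c_d/\bigl(2^{d/2}\Gamma((d+1)/2)\Gamma(1/2)\bigr)$. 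This gives exactly the claimed identity \eqref{eqn:reducebessel}.

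No step is really a serious obstacle — the whole argument is a chain of standard Bessel manipulations — but the point that demands the most care is matching the order of the Bessel function produced by Result \ref{res:fouriertransformofradialfun} (namely $(d-2)/2$) with the order needed to apply the integral formula \eqref{eqn:besselfn} (namely $d/2$). The derivative identity bridges this index shift; once that is in place, the rest is just bookkeeping of constants and powers of $L$ and $\|\xi\|$.
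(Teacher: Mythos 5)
Your proof is correct and follows essentially the same route as the paper: both pass through the radial Fourier transform formula, reduce to the intermediate identity $\hat\varphi_L(\xi)=c_d L^{d/2}\|\xi\|^{-d/2}J_{d/2}(L\|\xi\|)$, and then apply the integral representation \eqref{eqn:besselfn} with $\nu=d/2$. The only cosmetic difference is that you justify the index-shifting integration $\int_0^{s}u^{d/2}J_{d/2-1}(u)\,du=s^{d/2}J_{d/2}(s)$ directly via the identity $\frac{d}{du}\bigl(u^{\nu}J_{\nu}(u)\bigr)=u^{\nu}J_{\nu-1}(u)$, whereas the paper obtains the same step by rescaling the known Fourier transform of the unit ball from \cite[Example 4.5.3]{epstein}.
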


\begin{proof}[Proof of Proposition \ref{prop:fouriertransformofball}]
Observe that $\vp_L(x)=\vp_1(x/L)$. Therefore we have
\begin{align}\label{eqn:vpl}
	\widehat{\vp_{L}}(\xi)=\widehat{\vp_1(x/L)}(\xi)=L^d\widehat{\vp_1}(L\xi).
\end{align}
Again,	the Fourier transform of  the characteristic function of the unit ball $\mathcal B_1\subset \R^d$  is given  by, see \cite[Example 4.5.3]{epstein},
	\begin{align}\label{eqn:formula1}
	\hat{\vp}_1(\xi)
	=\frac{c_d}{\|\xi\|^{\frac{d}{2}}}J_{\frac{d}{2}}(\|\xi\|).
	\end{align}
	  Using \eqref{eqn:formula1} in \eqref{eqn:vpl} we get
	\begin{align}\label{eqn:fourierinverseballL}
	\hat{\vp}_L(\xi)=\frac{c_dL^{\frac{d}{2}}}{\|\xi\|^{\frac{d}{2}}}J_{\frac{d}{2}}(L\|\xi\|).
	\end{align}
	Then from \eqref{eqn:besselfn} we get 
	\begin{align}\label{eqn:reduction1}
	J_{\frac{d}{2}}(L\|\xi\|)=\frac{(\frac{L\|\xi\|}{2})^{\frac{d}{2}}}{\Gamma(\frac{d}{2}+\frac{1}{2})\Gamma(\frac{1}{2})}\int_{0}^{\pi}e^{iL\|\xi\|\cos \theta}\sin^{d}(\theta )d\theta.
	\end{align}
	The result follows from \eqref{eqn:fourierinverseballL} and \eqref{eqn:reduction1}.
\end{proof}


We also use the following asymptotic of Bessel functions.

\subsection{Asymptotic behavior of Bessel functions :} We have, see \cite[p. 364, 9.2.1]{handbook},
\begin{align*}
J_{\alpha}(z)=\sqrt{\frac{2}{\pi z}}\l(\cos(z-\frac{\alpha \pi}{2}-\frac{\pi}{4})+e^{|Im (z)|}O(|z|^{-1})\r),\mbox{ for $|$arg$(z)|<\pi$}.
\end{align*}
In particular if $Im(z)=0$ then we have
\begin{align*}
	J_{\alpha}(z)=\sqrt{\frac{2}{\pi z}}\l(\cos(z-\frac{\alpha \pi}{2}-\frac{\pi}{4})+O(|z|^{-1})\r),\mbox{ for $z\in \R$}.
\end{align*}
Then there exists a large $c>0$ such that
\begin{align}\label{eqn:asymptoticofbesssel}
J_{\alpha}(z)=\sqrt{\frac{2}{\pi z}}\l(\cos(z-\frac{\alpha \pi}{2}-\frac{\pi}{4})+g(z)\r), \mbox{ for $z\in \R$}.
\end{align}  
where $|g(z)|\lesssim |z|^{-1}$ for all $|z|>c$.

Now we proceed to prove Propositions \ref{lem:varianceforball} and \ref{lem:lowervariance}.

\begin{proof}[Proof of Proposition \ref{lem:varianceforball}]

	Let $\vp_{L}=\one_{\BB_L}$. By  Lemma \ref{ft:formula} we have 
	\begin{align*}
	\Var(Q_{\BB_{L}}(X))&=\frac{1}{(2\pi)^d}\int |\hat{\vp}_L(\xi)|^2S(\xi)d\xi
	\\&=\frac{1}{(2\pi)^d}\int_{\|\xi\|\le \frac{c}{L}}|\hat{\vp}_L(\xi)|^2S(\xi)d\xi+\frac{1}{(2\pi)^d}\int_{\|\xi\|> \frac{c}{L}}|\hat{\vp}_L(\xi)|^2S(\xi)d\xi.
	\end{align*}
	Note that from \eqref{eqn:reducebessel} we have
	\begin{align*}
	|\hat{\vp}_L(\xi)|^2\lesssim L^{2d}, \mbox{ for $\xi\in [-\pi,\pi]^d$}.
	\end{align*}	
	Therefore we get 
	\begin{align}\label{eqn:upb1}
	\int_{\|\xi\|\le \frac{c}{L}}|\hat{\vp}_L(\xi)|^2S(\xi)d\xi \lesssim L^{2d}\int_{\|\xi\|\le \frac{c}{L}}S(\xi)d\xi.
	\end{align}	
	Choose $c$ such that \eqref{eqn:asymptoticofbesssel} holds. Then, for large $L$, we have
	\begin{align*}
	J_{\frac{d}{2}}(L\|\xi\|)\lesssim \frac{1}{\sqrt{L\|\xi\|}}, \mbox{ for $\|\xi\|>\frac{c}{L}$}.
	\end{align*}	
	Then from \eqref{eqn:fourierinverseballL}, for $\|\xi\|>\frac{c}{L}$, we get
	\begin{align*}
	|\hat{\vp}_L(\xi)|^2\lesssim \frac{L^{d-1}}{\|\xi\|^{d+1}}.
	\end{align*}
	Therefore we have
	\begin{align}\label{eqn:upb2}
	\int_{\|\xi\|> \frac{c}{L}}|\hat{\vp}_L(\xi)|^2S(\xi)d\xi\lesssim L^{d-1}\int_{\|\xi\|> \frac{c}{L}} \frac{S(\xi)}{\|\xi\|^{d+1}}d\xi.
	\end{align}
	The result follows from \eqref{eqn:upb1} and \eqref{eqn:upb2}.
\end{proof}

\begin{proof}[Proof of Proposition \ref{lem:lowervariance}]
	From \eqref{eqn:reducebessel} we have
	\begin{align*}
	|\hat{\vp}_L(\xi)|^2 & \ge c_d'^2L^{2d}\l( \int_{0}^{\pi} \cos(L\|\xi\|\cos \theta)\sin^d(\theta) d\theta\r)^2.
	\end{align*}
	Note that, for $L\|\xi\|\le c$, there exists $\delta_c$ such that 
	\begin{align*}
	L\|\xi\|\cos \theta\le \frac{\pi}{4}, \mbox{  for $\theta\in [\frac{\pi}{2}-\delta_c,\frac{\pi}{2}+\delta_c]$}.
	\end{align*}
	Therefore, for $L\|\xi\|\le c$, we have 
	\begin{align*}
	\cos(L\|\xi\|\cos \theta)\sin^d(\theta)\ge \frac{ \cos^{d}(\delta_c)}{\sqrt 2}, \mbox{ for $\theta\in [\frac{\pi}{2}-\delta_c,\frac{\pi}{2}+\delta_c]$}.
	\end{align*}
	Hence we get 
	\begin{align*}
	|\hat{\vp}_L(\xi)|^2\gtrsim L^{2d},	\mbox{ for $L\|\xi\|\le c$}.
	\end{align*}
	Which implies , as $S\ge 0$, that 
	\begin{align}\label{eqn:LB1}
	\int_{\|\xi\|\le \frac{c}{L}}|\hat{\vp}_L(\xi)|^2S(\xi)d\xi\gtrsim L^{2d}\int_{\|\xi\|\le \frac{c}{L}}S(\xi)d\xi.
	\end{align}
	Now choose $c>0$ such that  \eqref{eqn:asymptoticofbesssel} holds. Then by \eqref{eqn:asymptoticofbesssel} from \eqref{eqn:fourierinverseballL}  we  get
	\begin{align}\label{eqn:LBphi}
	|\hat{\vp}_L(\xi)|^2&=\frac{C_dL^{d-1}}{\|\xi\|^{d+1}}\l(\cos(L\|\xi\|-\frac{d \pi}{4}-\frac{\pi}{4})+g(L\|\xi\|)\r)^2,
	\end{align}
	where $C_d$ is a constant depending on $d$. Note that, as $|g(L\|\xi\|)|\lesssim \frac{1}{L\|\xi\|}$, 
	\begin{align*}
	\l|\int_{\|\xi\|> \frac{c}{L}}\frac{C_dL^{d-1}g(L\|\xi\|)}{\|\xi\|^{d+1}} S(\xi)d\xi \r|\lesssim L^{d-2} \int_{\|\xi\|> \frac{c}{L}}\frac{ S(\xi)}{\|\xi\|^{d+2}}d\xi.
	\end{align*}
	Let $\phi=\frac{d \pi}{4}+\frac{\pi}{4}$. Therefore form \eqref{eqn:LBphi} we have 
	\begin{align}\label{eqn:lb2}
	\int_{\|\xi\|> \frac{c}{L}}|\hat{\vp}_L(\xi)|^2S(\xi)d\xi=&C_dL^{d-1}\mathcal I_1+O(L^{d-2})\mathcal I_2,
	\end{align}
	where 
	$$\mathcal I_1=\int_{\|\xi\|> \frac{c}{L}} \frac{S(\xi)}{\|\xi\|^{d+1}}\cos^2(L\|\xi\|-\phi)d\xi \mbox{ and } \mathcal{I}_2=\int_{\|\xi\|> \frac{c}{L}}\frac{ S(\xi)}{\|\xi\|^{d+2}}d\xi.$$
	The rest of proof is dedicated to estimate $\mathcal I_1$. 
	Let $\xi=\|\xi\|\omega $, where $\omega\in \S^{d-1}$. Then $d\xi=\|\xi\|^{d-1}d\|\xi\|dV_{\S^{d-1}}(\omega)$
	and 
	\begin{align*}
	\mathcal I_1=\int_{\S^{d-1}}\int_{\|\xi\|> \frac{c}{L}} \frac{S(\|\xi\|\omega)}{\|\xi\|^{2}}\cos^2(L\|\xi\|-\phi)d\|\xi\|dV_{\S^{d-1}}(\omega).
	\end{align*}
	Since $S$ is bounded on $[-\pi,\pi]^d$, for fixed $\epsilon>0$, we have
	\begin{align}\label{eqn:constantbound}
	0\le \int_{\S^{d-1}}\int_{\|\xi\|>\epsilon} \frac{S(\|\xi\|\omega)}{\|\xi\|^{2}}\cos^2(L\|\xi\|-\phi)d\|\xi\| dV_{\S^{d-1}}(\omega)\lesssim \frac{1}{\epsilon}.
	\end{align}
	Since $S$ is regular, there exists a function $A$ such that 
	\begin{align*}
	c_1 A(\xi)\le S(\xi)\le c_2 A(\xi) \mbox{ and } \frac{\int_{\S^{d-1}}A(\|\xi\|\omega)dV_{\S^{d-1}}(\omega)}{\|\xi\|^2}
	\end{align*}
	is monotone for $0\le \|\xi\|\le \epsilon$, where $c_1,c_2,\epsilon>0$. Note that $A$ is bounded and positive, as $S$ is bounded and positive. Therefore 
	\begin{align*}
	\mathcal I_1 \gtrsim \int_{\S^{d-1}}\int_{\frac{c}{L}<\|\xi\|<\eps} \frac{A(\|\xi\|\omega)}{\|\xi\|^{2}}\cos^2(L\|\xi\|-\phi)d\|\xi\| dV_{\S^{d-1}}(\omega).
	\end{align*}
	\noindent{\bf Suppose $A$ is radial:} For simplicity, we first assume that $A$ is radial, i.e., $A(\xi)=A(\|\xi\|)$. Then the regularity condition implies that $\frac{A(\|\xi\|)}{\|\xi\|^{2}}$ is monotone for $0\le \|\xi\|\le \epsilon$. Let $\frac{A(\|\xi\|)}{\|\xi\|^{2}}$ be monotone decreasing. Let $k_0\in \N$ be such that $c\le k_0\pi$, and $r_k=\frac{\pi k}{L}, k=1,2,\ldots$. Then
	\begin{align*}
	\int\limits_{ \frac{c}{L}<\|\xi\|<\epsilon} \frac{A(\|\xi\|)}{\|\xi\|^{2}}\cos^2(L\|\xi\|-\phi)d\|\xi\|
	\ge &\sum_{k=k_0}^{\lfloor \frac{\epsilon L}{\pi}\rfloor}\int_{r_k}^{r_{k+1}}\frac{A(\|\xi\|)}{\|\xi\|^{2}}\cos^2(L\|\xi\|-\phi)d\|\xi\|
	\\\ge &\sum_{k=k_0}^{\lfloor \frac{\epsilon L}{\pi}\rfloor}\frac{A(r_{k+1})}{(r_{k+1})^{2}}\int_{r_k}^{r_{k+1}}\cos^2(L\|\xi\|-\phi)d\|\xi\|
	\\= &\sum_{k=k_0}^{\lfloor \frac{\epsilon L}{\pi}\rfloor}\frac{A(r_{k+1})}{(r_{k+1})^{2}}\frac{\pi}{2L}.
	\end{align*}
	The last equality follows from the fact that
	\begin{align*}
	\int_{r_k}^{r_{k+1}}\cos^2(L\|\xi\|-\phi)d\|\xi\|= \frac{\pi}{2L}, \mbox{ for $k=1,2,\ldots $.}
	\end{align*}
	As $\frac{A(\|\xi\|)}{\|\xi\|^{2}}$ is monotone decreasing  for $0\le \|\xi\|\le \epsilon$, by the  Riemann integration 
	\begin{align*}
	\sum_{k=k_0}^{\lfloor \frac{\epsilon L}{\pi}\rfloor}\frac{A(r_{k+1})}{(r_{k+1})^{2}}\frac{\pi}{L} \gtrsim \int_{\frac{c}{L}}^{\epsilon}\frac{A(\|\xi\|)}{\|\xi\|^{2}}d\|\xi\|, \mbox{ for large $L$},
	\end{align*}
	as monotone functions are integrable. Thus,  for large $L$, we have  
	\begin{align}\label{eqn:riemann}
	\mathcal I_1\gtrsim \int\limits_{ \frac{c}{L}<\|\xi\|<\epsilon} \frac{A(\|\xi\|)}{\|\xi\|^{2}}\cos^2(L\|\xi\|-\phi)d\|\xi\|\gtrsim \int_{\frac{c}{L}}^{\epsilon}\frac{A(\|\xi\|)}{\|\xi\|^{2}}d\|\xi\|.
	\end{align}
	Combining \eqref{eqn:lb2}, \eqref{eqn:constantbound} and \eqref{eqn:riemann} we get, for large $L$, 
	\begin{align*}
	\int_{\|\xi\|> \frac{c}{L}}|\hat{\vp}_L(\xi)|^2S(\xi)d\xi \gtrsim L^{d-1} \l(\int_{\frac{c}{L}}^{\epsilon}\frac{A(\|\xi\|)}{\|\xi\|^{2}}d\|\xi\|+O(1)\r).  
	\end{align*}
Since $A$ is bounded and  $A(\xi)=A(\|\xi\|)$,  we get
	\begin{align*}
	\int_{\|\xi\|> \frac{c}{L}}|\hat{\vp}_L(\xi)|^2S(\xi)d\xi \gtrsim L^{d-1} \int_{\|\xi\|> \frac{c}{L}}\frac{A(\xi)}{\|\xi\|^{d+1}}d\xi.
	\end{align*}
	Using the regularity condition of $S$ we have
	\begin{align*}
	\int_{\|\xi\|> \frac{c}{L}}|\hat{\vp}_L(\xi)|^2S(\xi)d\xi\gtrsim L^{d-1} \int_{\|\xi\|> \frac{c}{L}}\frac{S(\xi)}{\|\xi\|^{d+1}}d\xi, \mbox{ for large $L$}.
	\end{align*}
	The inequalities also hold when $\frac{A(\|\xi\|)}{\|\xi\|^{2}}$ is monotone increasing  for $0\le \|\xi\|\le \epsilon$.

	\vspace{.2cm}
	\noindent{\bf  Suppose $A$ is not radial :} 
	Let 
	$ 
	h(\|\xi\|):=\frac{\int_{\S^{d-1}}A(\|\xi\|\omega)dV_{\S^{d-1}}(\omega)}{\|\xi\|^2}.
	$
	Then by change of variables we get
	\begin{align*}
	\int_{\frac{c}{L}<\|\xi\|<\eps } \frac{A(\xi)}{\|\xi\|^{d+1}}\cos^2(L\|\xi\|-\phi)d\xi=\int_{\frac{c}{L}<\|\xi\|<\eps }h(\|\xi\|)\cos^2(L\|\xi\|-\phi)d\|\xi\|.
	\end{align*}
	Now suppose $h$ 
	is decreasing for $0\le \|\xi \| \le \eps$,
	using the same arguments as before it can be shown that
	\begin{align*}
	\int_{\frac{c}{L}<\|\xi\|<\eps }h(\|\xi\|)\cos^2(L\|\xi\|-\phi)d\|\xi\|&\gtrsim \int_{\frac{c}{L} }^\eps h(\|\xi\|)d\|\xi\|
	\\&=\int_{\S^{d-1}}\int_{\frac{c}{L} }^\eps \frac{A(\|\xi\|\omega)}{\|\xi\|^2}d\|\xi\|dV_{\S^{d-1}}(\omega)
	\\&=\int_{\frac{c}{L}<\|\xi\|<\eps }\frac{A(\xi)}{\|\xi\|^{d+1}}d\xi.
	\end{align*}
	We get the last equality by the change of variables. Thus we have
	\begin{align*}
	\mathcal I_1 \gtrsim \int_{\frac{c}{L}<\|\xi\|<\eps }\frac{A(\xi)}{\|\xi\|^{d+1}}d\xi\gtrsim \int_{\frac{c}{L}<\|\xi\|<\eps }\frac{S(\xi)}{\|\xi\|^{d+1}}d\xi.
	\end{align*}
	Therefore, by \eqref{eqn:lb2} and  \eqref{eqn:constantbound}, we get 
	\begin{align}\label{eqn:LB2}
	\int\limits_{\|\xi\|> \frac{c}{L}}|\hat{\vp}_L(\xi)|^2S(\xi)d\xi \gtrsim L^{d-1} \int_{\|\xi\|>\frac{c}{L}}\frac{S(\xi)}{\|\xi\|^{d+1}}d\xi.
	\end{align}
	Therefore \eqref{eqn:LB1} and \eqref{eqn:LB2}  give the result.
\end{proof}

\section{proof of Theorem \ref{lem:covarianced}}
In this section we give the proof of Theorem \ref{lem:covarianced}.

\begin{proof}[Proof of Theorem \ref{lem:covarianced}]
	Let $\vp_{0,L}(x)=\one_{\CC_L^{(0)}}(x)$ and $\vp_{\n, L}(x)=\one_{\CC_L^{(\n)}}(x)$. Then 
	\begin{align*}
		Q_{\CC^{(0)}_L}(X)=\vp_{0,L}(X) \mbox{ and } Q_{\CC^{(\n)}_L}(X)=\vp_{\n, L}(X).
	\end{align*}
 Therefore using Lemma \ref{ft:formula} we have
	\begin{align}\label{eqn:cov1}
	\E[Q_{\CC^{(0)}_L}(X)Q_{\CC^{(\n)}_L}(X)]=\frac{1}{(2\pi)^d}\int_{-\pi}^{\pi}\cdots \int_{-\pi}^{\pi}{\hat\vp_{0,L}}(x)\bar{{\hat\vp_{\n, L}}(x)}S(x)dx,
	\end{align}
where $dx=dx_1\cdots dx_d$. Observe that $\vp_{\n,L}(x)=\vp_{0,L}(x-\n L)$. Therefore, as $\widehat{f(\cdot -a)}=e^{-i\langle \cdot, a\rangle}\widehat{f}(\cdot)$, we have 
\begin{align*}
\widehat{\vp_{\n,L}}(x)=\widehat{\vp_1(\cdot-\n L)}(x)=e^{-i\langle \n L, x\rangle}\widehat{\vp_{0,L}}(x)=\prod_{k=1}^{d}\frac{(1-e^{-iLx_k})e^{-iLn_kx_k}}{ix_k},
\end{align*}
as the Fourier transform of $\vp_{0,L}$ is given by, for $x\in \R^d$, 
	\begin{align*}
	\widehat{\vp_{0,L}}(x)&=\int \cdots \int e^{-ix\cdot t}\one_{\CC_{L}^{(\n)}}(t)dt
	=\prod_{k=1}^{d}\int_{0}^{L}e^{-ix_kt_k}dt_k=\prod_{k=1}^{d}\frac{(1-e^{-iLx_k})}{ix_k}.
	\end{align*}
	Therefore, for $x\in \R^d$ and $\n\in (\N\cup \{0\})^d$, we get
	\begin{align*}
	{\hat\vp_{0,L}}(x)\bar{{\hat\vp_{\n, L}}(x)}=\frac{\prod_{k=1}^{d}(1-e^{-iLx_k})(1-e^{iLx_k})e^{iLn_kx_k}}{x_1^2\cdots x_d^2}.
	\end{align*}
	Let $\epsilon_{k,1}^{(1)}=0, \epsilon_{k,1}^{(2)}=1, \epsilon_{k,2}^{(1)}=0, \epsilon_{k,2}^{(2)}=-1 $ for $k=1,\ldots, d$. 
	Then we can write
	\begin{align*}
		(1-e^{-iLx_k})(1-e^{iLx_k})=\sum_{p_k,q_k\in \{1,2\}}(-1)^{\l(\epsilon_{k,1}^{(p_k)}+\epsilon_{k,2}^{(q_k)}\r)}e^{iL\l(\epsilon_{k,1}^{(p_k)}+\epsilon_{k,2}^{(q_k)}\r)},
	\end{align*}
	for $k=1,\ldots, d$. Let $p=(p_1,\ldots,p_d), q=(q_1,\dots, q_d)\in \{1,2\}^d$. Then 
	\begin{align}\label{eqn:cov2}
&	{\hat\vp_{0,L}}(x)\bar{{\hat\vp_{\n, L}}(x)}\nonumber
\\=&\frac{1}{x_1^2\ldots x_d^2}\sum_{p,q}(-1)^{\sum_{k=1}^{d}\l(\epsilon_{k,1}^{(p_k)}+\epsilon_{k,2}^{(q_k)}\r)}e^{iL\sum_{k=1}^{d}\l(\epsilon_{k,1}^{(p_k)}+\epsilon_{k,2}^{(q_k)}+n_k\r)x_k}.
	\end{align}
	The sum is taken over all possible values of $p$ and $q$. Let  $f(x)=\frac{S(x)}{x_1^2\cdots x_d^2}$. The assumption on $S$ implies that $\int_{\R^d} |f(x)|dx<\infty$. Therefore 
	\begin{align*}
		&\frac{1}{(2\pi)^d}\int_{[-\pi,\pi]^d}e^{iL\sum_{k=1}^{d}\l(\epsilon_{k,1}^{(p_k)}+\epsilon_{k,2}^{(q_k)}+n_k\r)x_k}f(x)dx
		\\=&\hat{f}\l(-L(\epsilon_{1,1}^{(p_1)}+\epsilon_{1,2}^{(q_1)}+n_1,\ldots, \epsilon_{d,1}^{(p_d)}+\epsilon_{d,2}^{(q_d)}+n_d)\r),
	\end{align*}
	where $\hat f$ denotes the Fourier transform of $f$. Thus, from \eqref{eqn:cov1} and \eqref{eqn:cov2}, 
	\begin{align*}
	&\E[Q_{\CC^{(0)}_L}(X)Q_{\CC^{(\n)}_L}(X)]
		\\=&\sum_{p,q}(-1)^{\sum_{k=1}^{d}\l(\epsilon_{k,1}^{(p_k)}+\epsilon_{k,2}^{(q_k)}\r)}\hat{f}\l(-L(\epsilon_{1,1}^{(p_1)}+\epsilon_{1,2}^{(q_1)}+n_1,\ldots, \epsilon_{d,1}^{(p_d)}+\epsilon_{d,2}^{(q_d)}+n_d)\r).
	\end{align*}
If $(\epsilon_{1,1}^{(p_1)}+\epsilon_{1,2}^{(q_1)}+n_1,\ldots, \epsilon_{d,1}^{(p_d)}+\epsilon_{d,2}^{(q_d)}+n_d)\neq 0$ then  Lemma \ref{ft:RL} implies that 
	\begin{align}\label{eqn:cov3}
	\lim_{L\to\infty}\hat{f}\l(-L(\epsilon_{1,1}^{(p_1)}+\epsilon_{1,2}^{(q_1)}+n_1,\ldots, \epsilon_{d,1}^{(p_d)}+\epsilon_{d,2}^{(q_d)}+n_d)\r)=0.
	\end{align}
	Suppose $\CC_L^{(0)}$ and $\CC_L^{(\n)}$ are disjoint. Then there exists $k\in \{1,\ldots, d\}$ such that $n_k\ge 2$.
	In this case we have
	$$
		\epsilon_{k,1}^{(p_k)}+\epsilon_{k,2}^{(q_k)}+n_k\ge 1.
	$$	
Therefore \eqref{eqn:cov3} implies that if 	$\CC_L^{(0)}$ and $\CC_L^{(\n)}$ are disjoint then 
\begin{align*}
	\lim_{L\to\infty}\E[Q_{\CC^{(0)}_L}(X)Q_{\CC^{(\n)}_L}(X)]=0.
\end{align*}
	It remains to show that if dim$(\CC_L^{(0)}\cap \CC_L^{(\n)})=d-j$, for $j=0,1, \ldots, d$, then 
	\[
		\lim_{L\to\infty}\E[Q_{\CC^{(0)}_L}(X)Q_{\CC^{(\n)}_L}(X)]=(-1)^j\frac{\sigma_d^2}{2^j}.
	\]
	Let $\n\in (\N\cup \{0\})^d$ such that $(\epsilon_{1,1}^{(p_1)}+\epsilon_{1,2}^{(q_1)}+n_1,\ldots, \epsilon_{d,1}^{(p_d)}+\epsilon_{d,2}^{(q_d)}+n_d)= 0$ for some $p, q\in \{1,2\}^d$. 
	In this case, Lemma \ref{ft:RL} implies that 
	\begin{align}\label{eqn:cov4}
		\lim_{L\to\infty}\E[Q_{\CC^{(0)}_L}(X)Q_{\CC^{(\n)}_L}(X)]=\sum_{p,q}(-1)^{\sum_{k=1}^{d}\l(\epsilon_{k,1}^{(p_k)}+\epsilon_{k,2}^{(q_k)}\r)}\hat{f}(0).
	\end{align}
	Now we need to find the cardinality of the following set
	\begin{align*}
	C_{\n}=\{(p,q)\in \{1,2\}^d\; :\; \epsilon_{k,1}^{(p_k)}+\epsilon_{k,2}^{(q_k)}+n_k=0, k=1,\ldots, d\}.
	\end{align*}
	 Note that if $n_k=0$ then $\epsilon_{k,1}^{(p_k)}+\epsilon_{k,2}^{(q_k)}+n_k=0$ when $(p_k,q_k)\in \{(1,1), (2,2)\}$, and if $n_k=1$ then  $\epsilon_{1,1}^{(p_k)}+\epsilon_{1,2}^{(q_k)}+n_k=0$ when $(p_k,q_k)=(1,2)$. 
	 
	 Observe that  if dim$(\CC_L^{(0)}\cap \CC_L^{(\n)})=d-j$ then there exists $j$ indices $i_1,\ldots, i_{j}$ such that $n_{i_k}=1$ for $k=1,\ldots, j$ and the rest of the $d-j$ coordinates of $\n$ are $0$. Therefore the  cardinality of $C_{\n}$ is
	 $$
	 |C_{\n}|=2^{d-j}.
	 $$
	 	Since $\hat f(0)=\frac{1}{(2\pi)^d}\int_{[-\pi,\pi]^d} \frac{S(x)}{x_1^2\cdots x_d^2}dx=\frac{\sigma_d^2}{2^d}$. Therefore from \eqref{eqn:cov4} we get 
	\[
	\lim_{L\to\infty}\E[Q_{\CC^{(0)}_L}(X)Q_{\CC^{(\n)}_L}(X)]=(-1)^j\frac{\sigma_d^2}{2^j}.
	\]
	The factor $(-1)^j$ appears because $j$ many coordinates of $\n$ are $1$. 
\end{proof}

\section{Proofs of Theorem \ref{lem:variancenonzerodge2} and Corollary \ref{prop:d=2}}\label{sec:dge2nonzeros}
In this section we prove Theorem \ref{lem:variancenonzerodge2} and Corollary \ref{prop:d=2}.
\begin{proof}[Proof of Theorem \ref{lem:variancenonzerodge2}]
We have  $\vp_L=\one_{\CC_L}$. Then 
\begin{align*}
	\hat \vp_L(x)=\int_{-L}^{L}\cdots \int_{-L}^{L}e^{-it\cdot x}dt_1\cdots dt_d=\prod_{k=1}^{d}\frac{e^{iLx_k}-e^{-iLx_k}}{ix_k}=\prod_{k=1}^{d}\frac{2\sin Lx_k}{x_k}.
\end{align*}
Note that $Q_{\CC_L}(X)=\vp_L(X)$. Therefore by Lemma \ref{ft:formula} we have
\begin{align*}
\Var(Q_{\CC_L}(X))&=\frac{1}{(2\pi)^d}\int_{-\pi}^{\pi}\cdots \int_{-\pi}^{\pi}|\hat\vp_L(x)|^2S(x)dx
\\&=\frac{1}{(2\pi)^d}\int_{-\pi}^{\pi}\cdots \int_{-\pi}^{\pi}\prod_{k=1}^{d}\frac{4\sin^2Lx_k}{x_k^2}S(x)dx,
\end{align*}
where $x=(x_1,\ldots, x_d)$ and $dx=dx_1\cdots dx_d$.  As $S$ is symmetric, then
\begin{align}\label{variance}
\Var(Q_{\CC_L}(X))=8^dJ, \mbox{ where $J=\frac{1}{(2\pi)^d}\int_{0}^{\pi}\cdots \int_{0}^{\pi}\prod_{k=1}^{d}\frac{\sin^2Lx_k}{x_k^2}S(x)dx$}.
\end{align}
Let $A_{i_1,\ldots,i_k}^{\delta}=\{ x\in [0,\pi]^d :  |x_i| < \delta \;\mathrm{ iff }\; i \in  \{i_1,\ldots,i_k\} \}$ and $[d]=\{1,\ldots, d\}$ Then define 
$$
A_{\delta}=\cup_{k=1}^d\cup_{\{i_1,\ldots, i_k\}\subset [d]}A_{i_1.\ldots,i_k}^{\delta}.
$$  
Note that $A_\delta^c=[0,\pi]^d\backslash A_{\delta}=\{x\in [0, \pi]^d \; :\; x_1,\ldots, x_d\ge \delta  \}$. Then 
\begin{align*}
	&J=J_1+J_2, \mbox{ where }
\\&	\mbox{$J_1=\frac{1}{(2\pi)^d}\int_{A_\delta}\prod_{k=1}^{d}\frac{\sin^2Lx_k}{x_k^2}S(x)dx$, $J_2=\frac{1}{(2\pi)^d}\int_{A_\delta^c}\prod_{k=1}^{d}\frac{\sin^2Lx_k}{x_k^2}S(x)dx$}.
\end{align*}
Since $\sin^2Lx\le 1$, by $(C2)$ we have
\begin{align}\label{eqn:J2}
	|J_2|<\infty, \mbox{ for $\delta>0$}.
\end{align}
Now we estimate $J_1$. We have 
\[
J_1\le \sum_{k=1}^d\sum_{1\le i_1,\ldots, i_k\le d} I_{i_1,\ldots, i_k}, \mbox{ where } I_{i_1,\ldots, i_k}=\frac{1}{(2\pi)^d}\int_{A_{i_1.\ldots,i_k}^{\delta}}\prod_{k=1}^{d}\frac{\sin^2Lx_k}{x_k^2}S(x)dx.
\]
The assumption on $S$ implies that there exists $c_1,c_2, \delta>0$ such that 
\begin{align*}
c_1x_{i_1}^{\alpha_{i_1}}\ldots x_{i_k}^{\alpha_{i_k}}\le S(x)\le c_2x_{i_1}^{\alpha_{i_1}}\ldots x_{i_k}^{\alpha_{i_k}}
, \mbox{ for $x\in A_{i_1.\ldots,i_k}^{\delta}$}.
\end{align*}
Using the last equation we get
\begin{align*}
&c_1 J_{i_1,\ldots,i_k}'\le 	I_{i_1,\ldots, i_k}\le c_2 J_{i_1,\ldots,i_k}', 
\end{align*}
with
\begin{align*}
J_{i_1,\ldots,i_k}'&=\prod_{j=1}^{k}\int_0^{\delta}\frac{\sin^2Lx_j}{x_j^{2-\alpha_{i_j}}}dx_j\prod_{j={k+1}}^{d}\int_{\delta}^{\pi}\frac{1}{x_j^{2}}dx_j
	=c\prod_{j=1}^{k}\int_0^{\delta}\frac{\sin^2Lx_j}{x_j^{2-\alpha_{i_j}}}dx_j.
\end{align*}
where $c=(\int_{\delta}^{\pi}x^{-2}dx)^{d-k}$ is a constant. Now we show the following lemma.
\begin{lemma}\label{lem:d=1}
Given $\delta>0$, as $L\to \infty$,
\[
\int_{0}^{\delta L}\frac{\sin^2x}{x^{2-\alpha}}dx=\l\{\begin{array}{lcl}
\Theta(1) & \mbox{ if } & \alpha\in [0,1),
\\ \\
\Theta(\log L) & \mbox{ if } & \alpha=1.
\end{array}
\r.
\]
\end{lemma}
\begin{proof}[Proof of Lemma \ref{lem:d=1}]
	It is clear that, as $|\sin x/x|\le 1$ and $|\sin x|\le 1$, 
	\[
	\int_{0}^{\delta L}\frac{\sin^2 x}{x^{2-\alpha}}dx<\int_{0}^{1}x^\alpha dx +\int_1^{\delta L}x^{-2+\alpha}dx.
	\]
	The first integral is obviously bounded, whereas the second is bounded for $\alpha<1$ and behaves like $\log L$ for $\alpha=1$. Hence the lemma. 
\end{proof}
Now observe that by a change of variables and Lemma \ref{lem:d=1} it follows that 
\[
\int_0^{\delta}\frac{\sin^2Lx_j}{x_j^{2-\alpha_{i_j}}}dx_j=\l\{\begin{array}{lcl}
\Theta(1) & \mbox{ if } & \alpha\in [0,1),
\\ \\
\Theta(\log L) & \mbox{ if } & \alpha=1.
\end{array}
\r.
\]
Let $\tau_{i_1,\ldots,i_k}=|\{j\in [k]\; :\; \alpha_{i_j}=1\}|$.
 Then it follows that
\begin{align*}
I_{i_1,\ldots, i_k}=\Theta(L^{\sum_{j=1}^{k}(1-\alpha_{i_j})}(\log L)^{\tau_{i_1,\ldots, i_k}})
\end{align*}
 Clearly, $\sum_{j=1}^{k}(1-\alpha_{i_j})\le m_d$ and  $\tau_{i_1,\ldots,i_d}\le \tau_{d}$ so that $I_{i_1,\ldots, i_d}\le I_{1,\ldots,d}$ and 
\[
I_{1,\ldots, d}=\Theta(L^{d-m_{d}}(\log L)^{\tau_{d}}).
\]
Therefore we have 
\begin{align}\label{eqn:d}
I_{1,\ldots, d}\lesssim J_1\lesssim I_{1,\ldots, d} \imply J_1=\Theta(L^{d-m_{d}}(\log L)^{\tau_{d}}).
\end{align}
The left hand side follows from the fact that $A_{1,\ldots,d}^{\delta}\subset A_{\delta}$. Therefore \eqref{eqn:J2} and \eqref{eqn:d} give the result.
\end{proof}

\begin{proof}[Proof of Corollary \ref{prop:d=2}]
	Let $\vp_L=\one_{[-1,1]^d}$. Then from \eqref{variance} we have
	\begin{align*}
	\Var(Q_{\CC_L}(X))=8^dJ, \mbox{ where $J=\frac{1}{(2\pi)^d}\int_{0}^{\pi}\cdots \int_{0}^{\pi}\prod_{k=1}^{d}\frac{\sin^2Lx_k}{x_k^2}S(x)dx$}.
	\end{align*}
	Let $B(0,\delta):=\{x\in [0,\pi]^d\; :\; \|x\|_2<\delta \}$. Observe that $$
	\{x\in [0,\pi]^d\;: \; (\forall k=1,\ldots, d)\;\; |x_k|\le \delta d^{-1/2}\}\subset B(0,\delta)\subset [-\delta,\delta]^d.
	$$ 
	Then we have $J=J_1+J_2$, where 
	\[
	J_1=\frac{1}{(2\pi)^d}\int_{B(0,\delta)}\prod_{k=1}^{d} \frac{\sin^2Lx_k}{x_k^2}S(x)dx, \; J_2=\frac{1}{(2\pi)^d}\int_{B(0,\delta)^c}\prod_{k=1}^{d}\frac{\sin^2Lx_k}{x_k^2}S(x)dx.
	\]
	By the assumption of $S$ implies that 
	\begin{align*}
		|J_2|<\infty.
	\end{align*}
	So it remains to estimate $J_1$. Since ${S(x)}=\Theta(\|x\|_2^{\alpha}) $ as $\|x\|_2\to 0$, there exist $c_1,c_2,\delta>0$ such that
	\begin{align*}
	c_1\|x\|_2^{\alpha}\le	S(x)\le c_2\|x\|_2^{\alpha}, \mbox{ for $x\in B(0,\delta)$}.
	\end{align*}
Note that $\|x\|_2^{\alpha}\ge |x_1|^{\alpha}$. Therefore we have 
\[
J_1\ge \int_{B(0,\delta)}\prod_{k=1}^{d} \frac{\sin^2Lx_k}{x_k^2}dx_k\gtrsim \frac{1}{(2\pi)^d} \prod_{k=1}^{d}\int_0^{\delta/d^{1/2}}\frac{\sin^2Lx_k}{x_k^{2-\alpha_k}}dx_k
\]
where $\alpha_1=\alpha$ and $\alpha_2=\cdots=\alpha_d=0$. Therefore by Lemma \ref{lem:d=1} we get
\begin{align}\label{eqn:J1low}
J_1\gtrsim \l\{\begin{array}{ll}
L^{d-\alpha} & \mbox{ if $0\le \alpha<1$,}\\ \\
L^{d-1}\log L  & \mbox{ if $\alpha=1$}.
\end{array}\r.
\end{align}
 Again, observe that
\begin{align*}
	\|x\|_2^\alpha\le d^{\alpha/2}(\max\{|x_1|,\ldots, |x_k|\})^{\alpha}\le d^{\alpha/2}(|x_1|^{\alpha}+\cdots+|x_d|^{\alpha}).
\end{align*}
Therefore by Lemma \ref{lem:d=1}, for  $\alpha_j=\alpha$ and $\alpha_k=0$ for $k\neq j$,  we get
\begin{align}\label{eqn:J1up}
J_1\lesssim \prod_{k=1}^{d}\int_0^{\delta}\frac{\sin^2Lx_k}{x_k^{2-\alpha_k}}dx_k\lesssim \l\{\begin{array}{ll}
L^{d-\alpha} & \mbox{ if $0\le \alpha<1$,}\\ \\
L^{d-1}\log L  & \mbox{ if $\alpha=1$}.
\end{array}\r.
\end{align}
  The result follows from \eqref{eqn:J1low} and \eqref{eqn:J1up}, as $|J_2|<\infty$.
\end{proof}

\section{Proof of Theorem \ref{thm:clt}}
We prove Theorem \ref{thm:clt} in this section. First, we recall the definitions of correlation, truncated correlation functions and  their properties. 
\subsection{Joint intensity functions :}\label{sec:cor} The joint intensity measures of $X=(X_i)_{i\in\Z^d}$. Let $\CC_c(\R^d)$ be the space of compactly supported continuous functions on $\R^d$. 
Let 
\begin{align}\label{eqn:kthintesity}
\rho_k(i_1,\ldots, i_k)=\E[X_{i_1}\cdots X_{i_k}], \mbox{ for $i_1,\ldots, i_k\in \Z^d$}.
\end{align}
Next, for the sake of completeness, we show that $\rho_k$ is the $k$-th intensity function of $X$ with respect to counting measure on $\Z^{kd}$. See \cite[Section 3.3]{dorlas}, \cite{OL} for the details.

\begin{enumerate}
	\item {\it The first intensity function:} Let $\varphi$ be a continuous function on $\R^d$. Define  $\vp(X):=\sum_{i\in \Z^d}\vp(i)X_i$ and  $T_1(\vp)=\E(\vp(X))$. Note that $T_1$ is a continuous linear form on $\mathcal C_c(\R^d)$. Then by the Riesz-Markov (-Kakutani) theorem there is a unique positive Borel measure $\mu_1$  in $\Z^d$ such that 
	\begin{align}\label{eqn:firstintensitymeasure}
	T_1(\vp)=\int \vp(x)d\mu_1(x), \mbox{ for all $\vp\in \CC_c(\R^d)$}.
	\end{align}
	See \cite[p. 10]{manjubook}. The measure $\mu_1$ is called the first intensity measure. Observe that the measure  $\sum_{i\in \Z^d}\rho(i)\delta_i$, where $\delta_x(\cdot)$ denotes the Dirac-delta measure at $x$, satisfies the relation \eqref{eqn:firstintensitymeasure}.
	By the uniqueness of first intensity measure, , $\mu_1=\sum_{i\in \Z^d}\rho(i)\delta_i$. Therefore $\rho(i)$ is the first intensity function of $X$ with respect to the counting measure on $\Z^d$.
	
	\item {\it The second intensity function:} Define a positive bilinear functional on $C_c(\R^d)\times C_c(\R^d)$ by
	\begin{align*}
	T_{2}(\vp,\psi)=\E[\vp(X)\psi(X)],
	\end{align*}
	which induces the a positive linear functional on $C_c(\R^2)$. Then there exists a  unique positive regular Borel measure $\mu_2$ such that 
	\begin{align*}
	T_{2}(\vp,\psi)&=\int_{\R^{2d}}\vp(x)\psi(y)d\mu_2(x,y).
	\end{align*}
	See \cite[p. 11]{manjubook}. Again the measure  $ \sum_{i, j\in \Z^d}\rho_2(i,j)\delta_i\delta_j$ satisfies the last equation. Therefore by uniqueness $\mu_2= \sum_{i, j\in \Z^d}\rho_2(i,j)\delta_i\delta_j$,  and $\rho_2$ is the second intensity function of $X$ with respect to the counting measure on $\Z^{2d}$. In particular if $\rho_2(i,j)=K(i-j)$ with  $\{K(j)\}\in L^1(\Z^d)$ then there exists $S\in L^1([-\pi,\pi]^d)$ such that
	$$
	S(\theta)=\sum_{j\in \Z^d}e^{i\theta.j}K(j), \mbox{ where $\theta\in [-\pi,\pi]^d$}.
	$$
	
	\item {\it The $k$-th intensity function :} Similarly, we have the $k$-th intensity function $\rho_k$ with respect to the counting measure on $\Z^{kd}$, and given by \eqref{eqn:kthintesity}. 
\end{enumerate}
Note that if $X_i\in \{0,1\}$ for $i\in \Lambda$ then $X=(X_i)_{i\in \Lambda}$ is a simple point process in $\Lambda$. If  the $k$-th intensity measure $\mu_k$ is absolutely continuous with respect to the Lebesgue measure, then  we get the joint intensity function in $\Lambda$ in usual sense, as considered in  \cite{MaY}. See \cite[Definition 1.2.2]{manjubook} for more details.

\subsection{Truncated (connected) correlation functions.} \label{sec:truncated}
Correlations between particles are better described by {\it truncated (connected)} correlation functions. These functions are defined recursively, see \cite[Section 3.4]{dorlas}, \cite{OL}, by
\begin{align}\label{eqn:correlation}
&	\rho_1^T(i_1):=\rho_1({i_1}),\nonumber
\\& \rho_n(i_1,\ldots, i_n)=\sum_{\pi\in \mathcal P(n)}\prod_{B\in \pi}\rho_B^T[i_1,\ldots, i_n],
\end{align}
where $\rho_B^T[i_1,\ldots, i_n]=\rho_{|B|}^T(i_j\;:\; j\in B)$, and $\mathcal P(n)$ denotes the set of all partitions of $\{1,\ldots, n\}$, $B$ runs through the list of blocks of the partition $\pi$. The truncated correlation functions can also be written explicitly in terms of the correlation functions as follows 
\begin{align}\label{eqn:truncatedcorrelation}
\rho_n^T(i_1,\ldots, i_n)=\sum_{\pi\in \mathcal P(n)}(|\pi|-1)!(-1)^{|\pi|-1}\prod_{B\in \pi}\rho_B[i_1,\ldots, i_n],
\end{align}
where $|\pi|$ is the number of parts in the partition.  Note that \eqref{eqn:correlation} and \eqref{eqn:truncatedcorrelation} implies that correlation functions are the analogue of the moments and truncated correlation functions are the analogue of the cumulants of a measure.

\subsection{Cumulants.}
Recall that the joint cumulant of  $X_1,\ldots, X_n$ is given by
\begin{align}\label{eqn:cumulant}
\kappa(X_1,\ldots, X_n)&=\sum_{\pi\in \mathcal P(n)}(|\pi|-1)!(-1)^{|\pi|-1}\prod_{B\in \pi}\E\l(\prod_{i\in B}X_i\r),
\end{align}
which can be found in  \url{https://en.wikipedia.org/wiki/Cumulant}.
If some of the random variables are independent of all of the others, then any cumulant involving two (or more) independent random variables is zero. If all $n$ random variables are the same, then the joint cumulant is the $n$-th ordinary cumulant. The joint cumulant holds the multilinearity property, i.e.,
\begin{align}\label{eqn:cumulantlinear}
&\kappa(X_1,\ldots,c_1X_k+c_2X_k',\ldots, X_n)\nonumber
\\&=c_1\kappa(X_1,\ldots,X_k,\ldots, X_n)+c_2\kappa(X_1,\ldots,X_k',\ldots, X_n)
\end{align}
for all $k=1,\ldots, n.$

Now we proceed to prove the theorem.

\begin{proof}[Proof of Theorem \ref{thm:clt}]
	 Marcinkiewicz  showed that the normal distribution is the only distribution whose cumulant generating function is a polynomial, i.e. the only distribution having a finite number of non-zero cumulants.
See Theorem \ref{thm:marc}, \cite[p. 152]{katznelson2004introduction}.	 Let 
	\begin{align*}
	\bar Q_{\BB_L}(X)=\frac{Q_{\BB_L}(X)-\E[Q_{\BB_L}(X)]}{\sqrt{\Var(Q_{\BB_L}(X))}}.
	\end{align*}
It is enough to show that all but finitely many cumulants of $\bar Q_{\BB_L}(X)$ are asymptotically zero.	Recall  $Q_{\BB_L}(X)=\sum_{i\in \BB_L'}X_i$, where $\BB_L'=\{i\in \Z^d\; :\; i\in \BB_L\}$. 
	Equations \eqref{eqn:truncatedcorrelation} and \eqref{eqn:cumulant} imply that
	\begin{align}\label{eqn:tcfandcumulant}
	\rho_n^T(i_1,\ldots, i_n)=\kappa(X_{i_1},\ldots, X_{i_n}),
	\end{align}
	and by the multilinearity  \eqref{eqn:cumulantlinear} we have 
	\begin{align*}
	\kappa_n(Q_{\BB_L}(X))=\sum_{i_1,\ldots,i_n\in \BB_L'}\kappa(X_{i_1},\ldots, X_{i_n})=\sum_{i_1,\ldots,i_n\in \BB_L'}\rho_n^T(i_1,\ldots,i_n).
	\end{align*}
	Since  $\kappa_n(cY)=c^n\kappa_n(Y)$, by  \eqref{eqn:cumulantlinear}, for any random variable $Y$ and $\E[Q_{\BB_L}(X)]=0$, then
	\begin{align*}
	\kappa_n(\bar Q_{\BB_L}(X))=\frac{1}{(\Var(Q_{\BB_L}(X)))^{n/2}}\sum_{i_1,\ldots,i_n\in \BB_L'}\rho_n^T(i_1,\ldots,i_n).
	\end{align*}
Recall that we assume that 
	\begin{align*}
	\sup_{i_1}	\sum_{i_2,\ldots,i_n\in \BB_L'}\rho_n^T(i_1,\ldots,i_n)<\infty.
	\end{align*}
	There exists a positive constant $C$ such that 
	\begin{align*}
	\kappa_n(\bar Q_{\BB_L}(X))\le \frac{C|\BB_L'|}{(\Var(Q_{\BB_L}(X)))^{n/2}}.
	\end{align*}
	By Proposition \ref{lem:lowervariance} we get 
	\begin{align*}
	\kappa_n(\bar Q_{\BB_L}(X))\lesssim \frac{L^d}{L^{(d-1)n/2}}=L^{-\frac{nd}{2}(1-\frac{1}{d}-\frac{2}{n})}.
	\end{align*}
	Note that, for $d>3$ and $n\ge 3$, we have $1-\frac{1}{d}-\frac{2}{n}>0$, and 
	\begin{align*}
	\kappa_n(\bar Q_{\BB_L}(X))\to 0, \mbox{ as $L\to \infty$}.
	\end{align*}
This completes the proof for $d>3$.	

	If $d=2,3$ then we have $1-\frac{1}{d}-\frac{2}{n}>0$ when $n\ge 5$. Therefore, for $n \ge 5$, we have 
	\begin{align*}
	\kappa_n(\bar  Q_{\BB_L}(X))\to 0, \mbox{ as $L\to \infty$}.
	\end{align*}

Therefore we have \[\kappa_1(\bar Q_{\BB_L}(X))=\E[\bar Q_{\BB_L}(X)]=0,  \kappa_1(\bar Q_{\BB_L}(X))=\E[(\bar Q_{\BB_L}(X))^2]=1 \] and \[   \lim_{L\to \infty}\kappa_n(\bar Q_{\BB_L}(X))=0\] for all $n\ge 5$. 
If we already knew that $\bar Q_{\BB_L}(X)$ converges in distribution, then this would demonstrate, via Marcinkiewicz's Theorem, that the limit must be Gaussian, since we would have  only finitely many non-zero cumulants in limit.

Since we do not know a priori that $\bar  Q_{\BB_L}(X)$ converges in distribution, we proceed via a compactness argument as follows. First, we observe that $\mathrm{Var}[\bar  Q_{\BB_L}(X)]=1$, which implies that the random variables $\bar  Q_{\BB_L}(X)$ give rise to a \textit{tight} family of distributions. This implies that, for any sequence $L_i \to \infty$, there exists a sub-sequence $L_{i_j}$ such that $\bar  Q_{\BB_{L_{i_j}}}(X)$ converges to a limiting random variable $\chi$. Using our investigation of the cumulants for the sequence of random variables $\bar  Q_{\BB_{L_{i_j}}}(X)$, we may deduce that such  $\chi$ must be a standard Gaussian. 

Now, let if possible $\bar Q_{\BB_L}(X)$ not converge to a standard Gaussian (for $d=2,3$). Let $\delta$ be a metric that metrizes the topology of distributional convergence in the space of probability measures on $\R^d$  (for instance, the Levy-Prokhorov metric). Let $\mu_L$ denote the probability measure corresponding to the random variable  $\bar Q_{\BB_L}(X)$, and $\mu$ be the distribution of a standard normal.   Then, for some  $\eps>0$, there must be a sequence $L_i \to \infty$  such that $\delta(\mu_{L_i},\mu)>\eps \quad \forall i$. But, using the subsequential argument of the previous paragraph, we may conclude that there is a subsequence $\{L_{i_j}\}_{j \ge 1} \subseteq \{L_i\}_{i\ge 1}$ such that $\bar  Q_{\BB_{L_{i_j}}}(X)$ converges to a standard Gaussian. But this would imply that $\delta(\mu_{L_{i_j}},\mu) \to 0$, whereas we have already noted that we must have  $\delta(\mu_{L_{i_j}},\mu)>\eps \quad \forall j$. This leads us to a contradiction, implying that $\bar Q_{\BB_L}(X)$ must converge to a standard Gaussian as $L \to \infty$ also for $d=2,3$.

 This completes the proof.
\end{proof}

\section{ Proof of Theorem \ref{lem:entropycontinuous}}\label{sec:ent}

In this section we prove Theorem \ref{lem:entropycontinuous} using the  Szeg\" o's theorem. Let $F$ be a real integrable function on $[-\pi,\pi]^d$, and let its Fourier transform is given by
\begin{align*}
	\hat{F}(k)=\frac{1}{(2\pi)^d}\int_{[-\pi,\pi]^d}e^{-ik\cdot \theta}F(\theta)d \theta,
\end{align*}
where $k \cdot\theta=\sum_{j=1}^{d}k_j\theta_j$. Let $T_L(F)=(\hat{F}(i-j))_{|\Lambda_L'|\times |\Lambda_L'|}$. Recall $\Lambda_L=\{Lx\; :\; x\in \Lambda\}$ and $\Lambda_L'=\Z^d\cap \Lambda_L$.
 A multidimensional version of Szeg\" o's theorem is stated below, which will be used in the proof of Theorem \ref{lem:entropycontinuous}.
\begin{theorem}\cite[Theorem 2]{linnik}\label{thm:szego}
Let $F> 0$ be a function on $[-\pi,\pi]^d$ such that 
\begin{align*}
	\sum_{k\in \Z^d}|\hat{F}(k)|, \sum_{k\in \Z^d}|k||\hat{F}(k)|^2<\infty,
\end{align*}  
where $|k|^2=\sum_{j=1}^{d}k_j^2$. Let $\Lambda \subset\R^d$ be as in Theorem \ref{lem:entropycontinuous}. Then as $L\to \infty$
\begin{align*}
	\lim_{L\to \infty}\frac{1}{|\Lambda_L'|}\log\det(T_L(F))=\frac{1}{(2\pi)^d}\int_{[-\pi,\pi]^d}\log F(\theta)d \theta.
\end{align*}
\end{theorem}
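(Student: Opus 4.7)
My plan is to prove this multidimensional Szeg\"o limit theorem by reducing it, via $\log\det = \tr\log$, to a ``weak Szeg\"o'' (Weyl-type) equidistribution statement for the spectrum of $T_L(F)$, which I will then establish by the method of moments followed by Weierstrass approximation.

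\textbf{Setup and spectral control.} Hermiticity of $T_L(F)$ is immediate from $F$ being real (so that $\hat F(-k)=\overline{\hat F(k)}$). Since $\sum_k |\hat F(k)| < \infty$, the function $F$ is continuous on the compact torus, and combined with $F > 0$ this forces $0 < c := \min F \le F \le \max F =: C < \infty$. For any $v\in \C^{\Lambda_L'}$ a direct computation gives
\begin{equation*}
\langle v, T_L(F) v\rangle = \frac{1}{(2\pi)^d}\int_{[-\pi,\pi]^d} F(\theta)\l|\sum_{n \in \Lambda_L'}v_n e^{in\cdot \theta}\r|^2 d\theta,
\end{equation*}
so the spectrum of $T_L(F)$ lies in $[c,C]$ uniformly in $L$. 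In particular $\log\det T_L(F) = \tr \log T_L(F)$, with $\log$ applied via continuous functional calculus on $[c,C]$.

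\textbf{Moment convergence.} The main estimate is that for every integer $k\ge 1$,
\begin{equation*}
\frac{1}{|\Lambda_L'|}\tr T_L(F)^k \;\longrightarrow\; \frac{1}{(2\pi)^d}\int_{[-\pi,\pi]^d}F(\theta)^k\, d\theta.
\end{equation*}
Expanding $\tr T_L(F)^k = \sum_{i_1,\ldots,i_k\in\Lambda_L'} \hat F(i_1-i_2)\cdots \hat F(i_k-i_1)$ and comparing with the ``bulk'' sum in which the intermediate indices $i_2,\ldots,i_k$ range freely over $\Z^d$ (which equals $|\Lambda_L'|\cdot\widehat{F^k}(0) = |\Lambda_L'|\cdot\frac{1}{(2\pi)^d}\int F^k$), the discrepancy is a boundary contribution: it consists of $k$-tuples in which some inner index would be forced outside $\Lambda_L'$. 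Cutting off at a scale $R$, the ``near-boundary'' part is bounded by $(\text{const})\cdot|\{i_1\in\Lambda_L' : \dist(i_1,\partial \Lambda_L) < R\}|\cdot(\sum|\hat F|)^{k-1}$, while the ``long-range'' tail is bounded by $|\Lambda_L'|\cdot\sum_{|j|>R}|\hat F(j)|$ times a constant. The $2$-smoothness of $\partial\Lambda$ with positive Gaussian curvature yields $|\{i_1\in\Lambda_L' : \dist(i_1,\partial\Lambda_L)<R\}| = O(RL^{d-1})$; the summability hypotheses $\sum|\hat F(k)|<\infty$ and $\sum|k||\hat F(k)|^2<\infty$ give quantitative decay of the tails, and choosing $R=R(L)\to\infty$ with $R(L)=o(L)$ renders the total error $o(L^d) = o(|\Lambda_L'|)$. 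By linearity of trace this gives $\frac{1}{|\Lambda_L'|}\tr p(T_L(F)) \to \frac{1}{(2\pi)^d}\int p(F(\theta))d\theta$ for every polynomial $p$.

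\textbf{Approximation and conclusion.} For continuous $g:[c,C]\to \R$ and any $\eps > 0$, Weierstrass yields a polynomial $p$ with $\sup_{x\in[c,C]}|g(x)-p(x)|<\eps$. Since the spectrum of $T_L(F)$ and the range of $F$ both lie in $[c,C]$,
\begin{equation*}
\l|\tfrac{1}{|\Lambda_L'|}\tr g(T_L(F)) - \tfrac{1}{|\Lambda_L'|}\tr p(T_L(F))\r|\le \eps \;\;\text{and}\;\; \l|\tfrac{1}{(2\pi)^d}\int_{[-\pi,\pi]^d}(g-p)(F(\theta))\,d\theta\r|\le \eps,
\end{equation*}
so the moment limit extends by continuity to every continuous $g$ on $[c,C]$, in particular to $g=\log$, which immediately yields the claim. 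The main obstacle is the boundary estimate in the moment step: the two summability hypotheses on $\hat F$ must be played off carefully against the surface geometry of $\partial\Lambda$ (perimeter of order $L^{d-1}$) so that the error between the restricted Toeplitz trace and the bulk value is genuinely $o(L^d)$. This is where the $2$-smoothness and positive curvature of $\partial\Lambda$ and the precise form of the summability conditions on $\hat F$ enter; everything else is essentially soft functional-analytic machinery.
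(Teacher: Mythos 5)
The paper does not prove this statement: it is imported verbatim as an external result from \cite[Theorem 2]{linnik} and used as a black box in the proof of Theorem \ref{lem:entropycontinuous}. Your argument is therefore necessarily a different route, namely a self-contained proof along the classical lines of the first (weak) Szeg\H{o} limit theorem, and it is essentially sound. The key steps all check out: absolute summability of $\hat F$ makes $F$ (after passing to its continuous representative, to which the hypothesis $F>0$ must be applied so that compactness of the torus gives $\min F=c>0$) continuous and bounded, the quadratic-form identity confines the spectrum of $T_L(F)$ to $[c,C]$ uniformly in $L$, the free sum over $i_2,\dots,i_k\in\Z^d$ of $\hat F(i_1-i_2)\cdots\hat F(i_k-i_1)$ equals $\widehat{F^k}(0)=\frac{1}{(2\pi)^d}\int F^k$ by the convolution theorem, the discrepancy with the restricted trace splits correctly into a near-boundary term of size $O(RL^{d-1})$ and a Fourier tail $|\Lambda_L'|\sum_{|j|>R}|\hat F(j)|$, and Weierstrass approximation on $[c,C]$ legitimately upgrades the polynomial moment limits to $g=\log$. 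Two remarks on how your proof relates to the quoted theorem. First, your argument uses only the hypothesis $\sum_k|\hat F(k)|<\infty$; it is this condition, not $\sum_k|k|\,|\hat F(k)|^2<\infty$, that supplies both the continuity of $F$ and the vanishing tail $\sum_{|j|>R}|\hat F(j)|\to0$, and the second condition plays no role in the first-order asymptotics (it appears in Linnik's statement because he establishes the sharper second-order, strong-Szeg\H{o}-type expansion). Second, and for the same reason, the positive Gaussian curvature of $\partial\Lambda$ is not needed anywhere in your argument: the boundary lattice-point count $|\{i\in\Z^d:\mathrm{dist}(i,\partial\Lambda_L)\le kR\}|=O(RL^{d-1})$ already follows from $\partial\Lambda$ being a compact $C^2$ (indeed Lipschitz) hypersurface. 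So your proof in fact establishes the displayed first-order limit under strictly weaker hypotheses than the cited theorem, which is more than the paper needs; the trade-off is that you only obtain the leading term, whereas Linnik's result, with the full set of hypotheses, controls the next-order correction as well.
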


%


For the classical Szeg\" o's theorem, we refer to \cite{szego}, \cite{Ibragimov}, \cite{simon} and references there in. The following well known fact will be used in the proof of Theorem \ref{lem:entropycontinuous}.
For the sake of completeness we give a proof.
\begin{lemma}\label{ft:entropy}
	Let $X=(X_1,\ldots, X_d)$ be  a vector of random variables in $\R^d$ with continuous density, and $G=(G_1,\ldots, G_d)$ be a vector of Gaussian random variables in $\R^d$. Suppose $\E[X_i]=\E[G_i]=0$ and $\E[X_iX_j]=\E[G_iG_j]$ for all $i,j\in \{1,\ldots, d\}$. Then
	$$h(X)\le h(G).$$
\end{lemma}

\begin{proof}[Proof of Theorem \ref{lem:entropycontinuous}]
	Lemma \ref{ft:entropy} implies that
	\begin{align}\label{eqn:ent1}
		\frac{h(X|_{\Lambda_L})}{|\Lambda_L'|}\le\frac{h(G|_{\Lambda_L})}{|\Lambda_L'|}, 
	\end{align} 
where  $(G_i)_{i\in \Z^d}$ be the mean zero variance one  Gaussian field with  the given covariance kernel $K$. Now we show that the right hand side diverges to $-\infty $ as $L\to \infty$.

Note that $G|_{\Lambda_L}$ can be thought as a vector of $|\Lambda_L'|$ many Gaussian random variables with mean zero and variance one. Again the joint distribution of Gaussian random variables determined by its kernel. Therefore the joint density of the random variables $G|_{\Lambda_L}$ is given by
	\begin{align*}
	f_L(x_L)=\frac{1}{\sqrt{\det(2\pi \Sigma_L)}}e^{-\frac{1}{2}x_L^t\Sigma_L^{-1}x_L},
	\end{align*}
	where $x_L$ is a vector of length $|\Lambda_L'|$ and $\Sigma_L$ is the covariance kernel matrix for the random variables $\{G_i \; :\; i\in \Lambda_L'\}$. In other words $\Sigma_L=(K(i-j))_{|\Lambda_L'|\times |\Lambda_L'|}$.
		Then
	\begin{align*}
	\log f_L(x_L)=-\frac{|\Lambda_L'|\log (2\pi)}{2}-\frac{\log \det(\Sigma_L)}{2}-\frac{1}{2}x_L^t\Sigma_L^{-1}x_L.
	\end{align*}
	Therefore the entropy of $G|_{\Lambda_L}$ is given by
	\begin{align}\label{eqn:ent2}
&	h(G|_{\Lambda_L})=-\int_{\R^{|\Lambda_L'|}} f_L(x)\log f_L(x)dx\nonumber
	\\=&\frac{|\Lambda_L'|\log (2\pi)}{2}+\frac{\log \det(\Sigma_L)}{2}+\frac{1}{\sqrt{\det(2\pi \Sigma_L)}}\int (\frac{1}{2}x^t\Sigma_L^{-1}x) e^{-\frac{1}{2}x^t\Sigma_L^{-1}x}dx.
	\end{align}
	By the change of variables formula, putting $y=\Sigma^{1/2}x$, we get
	\begin{align}\label{eqn:ent3}
		\frac{1}{\sqrt{\det(2\pi \Sigma_L)}}\int_{\R^{|\Lambda_L'|}} (x^t\Sigma_L^{-1}x) e^{-x^t\Sigma_L^{-1}x}dx=&\frac{1}{2}\frac{1}{\sqrt{(2\pi )^{|\Lambda_L'|}}}\int_{\R^{|\Lambda_L'|}} (y^ty) e^{-\frac{1}{2}y^ty}dy\nonumber
		\\=&\frac{1}{2}\sum_{i\in \Lambda_L'}\frac{1}{\sqrt{2\pi}}\int_{\R}y_i^2e^{-\frac{y_i^2}{2}}dy_i=\frac{|\Lambda_L'|}{2}.
	\end{align}
Therefore using \eqref{eqn:ent3} from \eqref{eqn:ent2} we get
\begin{align*}
		h(G|_{\Lambda_L})=\frac{|\Lambda_L'|\log (2\pi)}{2}+\frac{\log \det(\Sigma_L)}{2}+\frac{|\Lambda_L'|}{2}.
\end{align*}
Which implies that
	\begin{align}\label{eqn:reduction}
	\lim_{L\to \infty}\frac{h(G|_{\Lambda_L})}{|\Lambda_L'|}=\frac{1}{2}+\frac{\log (2\pi)}{2}+\lim_{L\to \infty} \frac{\log \det(\Sigma_L)}{2|\Lambda_L'|}.
	\end{align}
	
	Next we calculate $\det(\Sigma_L)$ using the strong Szeg\" o's theorem.  Note that we have  $\Sigma_L=(K(i-j))_{\Lambda_L'\times \Lambda_L'}$. Recall \eqref{eqn:S}, we have
	\begin{align*}
		S(\theta)=\sum_{j\in \Z^d}K(j)e^{ij\cdot\theta}, \mbox{ where  $\theta \in [-\pi,\pi]^d$}
	\end{align*} 
	and $S\ge 0$ on $[-\pi,\pi]^d$.  Theorem \ref{thm:szego} can not be applied directly for $\Sigma_L$ as $S$ vanishes near the origin. We perturb the structure function to apply the Szeg\" o's theorem. Let $\epsilon>0$ and $S_{\epsilon}(\theta)=S(\theta)+\epsilon$ be a modified structure function. Its Fourier coefficient are given by
	\begin{align*}
		K_{\epsilon}(j)=\frac{1}{(2\pi)^d}\int_{[-\pi,\pi]^d} e^{-ij\cdot \theta}S_{\epsilon}(\theta)d\theta=K(j)+\epsilon \delta_0(j).
	\end{align*}
	Let $\Sigma_L^{\epsilon}=(K_{\epsilon}(i-j))_{|\Lambda_L'|\times |\Lambda_L'|}$. Then $\Sigma_L^{\eps}=\Sigma_L+\epsilon I$. Therefore we have
	\begin{align*}
		\det (\Sigma_L)\le \det (\Sigma_L^{\eps}),
	\end{align*}
	as $\Sigma_L$ is non-negative definite.
	Therefore  using Theorem \ref{thm:szego} we get
	\begin{align}\label{eqn:szegoap}
	\limsup_{L\to \infty} \frac{\log \det(\Sigma_L)}{|\Lambda_L'|}\le \limsup_{L\to \infty}\frac{\log \det(\Sigma_L^{\eps})}{|\Lambda_L'|}=\frac{1}{(2\pi)^d}\int_{[-\pi,\pi]^d}\log S_{\eps}(\theta)d \theta.
	\end{align}
	The right hand side of \eqref{eqn:szegoap} goes to $-\infty$ as $\eps\to 0$, since $S$ is bounded and zero in a neighbourhood  of the  origin. Therefore from  \eqref{eqn:szegoap} we get
	\begin{align}\label{eqn:ent4}
	\lim_{L\to \infty}\frac{h(G|_{\Lambda_L})}{|\Lambda_L'|}=-\infty.
	\end{align}
	We conclude the result from  \eqref{eqn:ent1} and \eqref{eqn:ent4}.

 We note in the passing that, even if the structure function $S$ does not vanish near the origin, as soon as $S$ fails to be logarithmically integrable, sending $\eps \to 0$ in \eqref{eqn:szegoap} we may deduce that the asymptotic entropy per site is still $-\infty$. Thus, entropic degeneracy already sets in under milder conditions than actual vanishing of the structure function in a neighbourhood of the origin.
\end{proof}

\section{Appendix}

For the sake of completeness, the Riemann-Lebesgue lemma is stated below, see  \cite[Lemma 4.2.1]{epstein}.
\begin{lemma}[Riemann Lebesgue lemma]\label{ft:RL}
	If $f$ is $L_1$ integrable on $\R^d$, that is to say, if the Lebesgue integral of $|f|$ is finite, then the Fourier transform of $f$ satisfies
	$$
	\hat{f}(z):=\int_{\mathbb{R}^d} f(x) \exp(-iz \cdot x)\,dx \rightarrow 0\text{ as } |z|\rightarrow \infty.
	$$
\end{lemma}

\begin{theorem}\cite[P. 152]{katznelson2004introduction}\label{thm:marc}
	If $e^{P(\xi)}$ is the Fourier-Stieltjes transform of a positive measure, with $P$ a polynomial, then $\deg P\le 2$.
\end{theorem}

For the sake of completeness, the proof of Lemma \ref{ft:entropy} is given below.
\begin{proof}[Proof of Lemma \ref{ft:entropy}]
	Let $g$ be the joint density function of the random  vector $G$. Then  $g$ is given by, set $x=(x_1,\ldots, x_d)$,
	\begin{align*}
		g(x)=\frac{1}{\det(\sqrt{2\pi\Sigma_d})}e^{-\frac{1}{2}x^t\Sigma_d^{-1} x},
	\end{align*}
where $\Sigma_d=(\sigma(i,j))_{d\times d}$ with $\sigma(i,j)=\E[X_iX_j]$.
	Let $f$ be the continuous density function of the  random variable $X$.  The relative entropy (also known as Kullback-Leibler divergence) between $f$ and $g$ is given by
	\begin{align*}
		0\le D_{\emph{KL}}(f\|g)=\int f(x)\log (\frac{f(x)}{g(x)})dx=-h(X)-\int f(x)\log (g(x))dx.
	\end{align*}
	Note  that we have 
	$$
	\log (g(x))=-\frac{1}{2}\log \det(2\pi\Sigma_d)-\frac{1}{2}x^t\Sigma_d^{-1} x.
	$$
	Which implies that, as $\E[X_iX_j]=\E[G_iG_j]$ for all $i,j$, 
	\begin{align*}
		\int_{-\infty}^\infty f(x)\log (g(x))dx&=-\frac{1}{2}\log \det(2\pi\Sigma_d)-\frac{1}{2}\int f(x)(x^t\Sigma_d^{-1} x)dx
		\\&=-\frac{1}{2}\log \det(2\pi\Sigma_d)-\frac{1}{2}\int g(x)(x^t\Sigma_d^{-1} x)dx=-h(G).
	\end{align*}
	Thus we have
	$
	h(G)-h(X)\ge 0.
	$
	  Moreover, the properties of Kullback-Leibler divergence imply that $h(X)=h(G)$ when  $f=g$.  Hence the result.
\end{proof}

\noindent{\bf Acknowledgements:} The research of Kartick Adhikari is supported by Zeff Fellowship, Viterbi Fellowship and Israel Science Foundation, Grant 2539/17 and Grant 771/17.  The work of Subhroshekhar Ghosh is supported by MOE grant R-146-000-250-133. The work of Joel L. Lebowitz is supported by AFOSR Grant FA9550-16-1-0037.

We thank the referees for insightful comments and suggestions.

\bibliography{bibfluctuations}

\newcommand{\etalchar}[1]{$^{#1}$}
\begin{thebibliography}{CDJZ{\etalchar{+}}16}

\bibitem[AM80]{aizenman1980structure}
Michael Aizenman and Philippe~A Martin.
\newblock Structure of gibbs states of one dimensional coulomb systems.
\newblock {\em Communications in Mathematical Physics}, 78(1):99--116, 1980.

\bibitem[AM81]{AiMa}
Michael Aizenman and Philippe~A. Martin.
\newblock Structure of {G}ibbs states of one-dimensional {C}oulomb systems.
\newblock {\em Comm. Math. Phys.}, 78(1):99--116, 1980/81.

\bibitem[AS92]{handbook}
Milton Abramowitz and Irene~A. Stegun, editors.
\newblock {\em Handbook of mathematical functions with formulas, graphs, and
  mathematical tables}.
\newblock Dover Publications, Inc., New York, 1992.
\newblock Reprint of the 1972 edition.

\bibitem[BBM10]{baake10}
Michael Baake, Matthias Birkner, and Robert~V Moody.
\newblock Diffraction of stochastic point sets: Explicitly computable examples.
\newblock {\em Communications in Mathematical Physics}, 293(3):611, 2010.

\bibitem[BC87]{beck1987chen}
J{\'o}zsef Beck and WL~Chen.
\newblock Irregularities of distribution, volume 89 of cambridge tracts in
  mathematics, 1987.

\bibitem[Bec87]{beck87}
J{\'o}zsef Beck.
\newblock Irregularities of distribution. {I}.
\newblock {\em Acta Mathematica}, 159:1--49, 1987.

\bibitem[BFN15]{BuFN}
Lauren~M Burcaw, Els Fieremans, and Dmitry~S Novikov.
\newblock Mesoscopic structure of neuronal tracts from time-dependent
  diffusion.
\newblock {\em NeuroImage}, 114:18--37, 2015.

\bibitem[BQ18]{BuQi}
Alexander~I. Bufetov and Yanqi Qiu.
\newblock {$J$}-{H}ermitian determinantal point processes: balanced rigidity
  and balanced {P}alm equivalence.
\newblock {\em Math. Ann.}, 371(1-2):127--188, 2018.

\bibitem[BS17]{buckely-sodin}
Jeremiah Buckley and Mikhail Sodin.
\newblock Fluctuations of the increment of the argument for the {G}aussian
  entire function.
\newblock {\em J. Stat. Phys.}, 168(2):300--330, 2017.

\bibitem[Buf16]{Bu}
Alexander~I Bufetov.
\newblock Rigidity of determinantal point processes with the airy, the bessel
  and the gamma kernel.
\newblock {\em Bulletin of Mathematical Sciences}, 6(1):163--172, 2016.

\bibitem[CDJZ{\etalchar{+}}16]{ChDZCT}
Eli Chertkov, Robert~A DiStasio~Jr, Ge~Zhang, Roberto Car, and Salvatore
  Torquato.
\newblock Inverse design of disordered stealthy hyperuniform spin chains.
\newblock {\em Physical Review B}, 93(6):064201, 2016.

\bibitem[CL95]{OL}
Ovidiu Costin and Joel~L Lebowitz.
\newblock Gaussian fluctuation in random matrices.
\newblock {\em Physical Review Letters}, 75(1):69, 1995.

\bibitem[DRS18]{dorlas}
Tony~C Dorlas, Alexei~L Rebenko, and Baptiste Savoie.
\newblock Correlation of clusters: Partially truncated correlation functions
  and their decay.
\newblock {\em arXiv preprint arXiv:1811.12342}, 2018.

\bibitem[DSM{\etalchar{+}}15]{DeSMRPRJTB}
Riccardo Degl'Innocenti, YD~Shah, L~Masini, A~Ronzani, A~Pitanti, Y~Ren,
  DS~Jessop, Alessandro Tredicucci, HE~Beere, and DA~Ritchie.
\newblock Thz quantum cascade lasers based on a hyperuniform design.
\newblock In {\em Quantum Sensing and Nanophotonic Devices XII}, volume 9370,
  page 93700A. International Society for Optics and Photonics, 2015.

\bibitem[EL62]{edwards1962exact}
SF~Edwards and Andrew Lenard.
\newblock Exact statistical mechanics of a one-dimensional system with coulomb
  forces. ii. the method of functional integration.
\newblock {\em Journal of Mathematical Physics}, 3(4):778--792, 1962.

\bibitem[Eps08]{epstein}
Charles~L. Epstein.
\newblock {\em Introduction to the mathematics of medical imaging}.
\newblock Society for Industrial and Applied Mathematics (SIAM), Philadelphia,
  PA, second edition, 2008.

\bibitem[Fol95]{folland}
Gerald~B. Folland.
\newblock {\em A course in abstract harmonic analysis}.
\newblock Studies in Advanced Mathematics. CRC Press, Boca Raton, FL, 1995.

\bibitem[FTS09]{FlTS}
Marian Florescu, Salvatore Torquato, and Paul~J Steinhardt.
\newblock Designer disordered materials with large, complete photonic band
  gaps.
\newblock {\em Proceedings of the National Academy of Sciences},
  106(49):20658--20663, 2009.

\bibitem[Gho15]{Gh}
Subhroshekhar Ghosh.
\newblock Determinantal processes and completeness of random exponentials: the
  critical case.
\newblock {\em Probab. Theory Related Fields}, 163(3-4):643--665, 2015.

\bibitem[GL17a]{GhL-JSP}
Subhroshekhar Ghosh and Joel Lebowitz.
\newblock Number rigidity in superhomogeneous random point fields.
\newblock {\em J. Stat. Phys.}, 166(3-4):1016--1027, 2017.

\bibitem[GL17b]{GhL-survey}
Subhroshekhar Ghosh and Joel~L. Lebowitz.
\newblock Fluctuations, large deviations and rigidity in hyperuniform systems:
  a brief survey.
\newblock {\em Indian J. Pure Appl. Math.}, 48(4):609--631, 2017.

\bibitem[GL18]{GhL-CMP}
Subhroshekhar Ghosh and Joel~L. Lebowitz.
\newblock Generalized stealthy hyperuniform processes: maximal rigidity and the
  bounded holes conjecture.
\newblock {\em Comm. Math. Phys.}, 363(1):97--110, 2018.

\bibitem[Gla03]{glasner}
Eli Glasner.
\newblock {\em Ergodic theory via joinings}, volume 101 of {\em Mathematical
  Surveys and Monographs}.
\newblock American Mathematical Society, Providence, RI, 2003.

\bibitem[GLS06]{GoLSp}
Sheldon Goldstein, Joel~L. Lebowitz, and Eugene~R. Speer.
\newblock Large deviations for a point process of bounded variability.
\newblock {\em Markov Process. Related Fields}, 12(2):235--256, 2006.

\bibitem[GP17]{GhP}
Subhroshekhar Ghosh and Yuval Peres.
\newblock Rigidity and tolerance in point processes: {G}aussian zeros and
  {G}inibre eigenvalues.
\newblock {\em Duke Math. J.}, 166(10):1789--1858, 2017.

\bibitem[HCL17]{HeCL}
Daniel Hexner, Paul~M Chaikin, and Dov Levine.
\newblock Enhanced hyperuniformity from random reorganization.
\newblock {\em Proceedings of the National Academy of Sciences},
  114(17):4294--4299, 2017.

\bibitem[HKPV09]{manjubook}
Ben~J. Hough, Manjunath Krishnapur, Yuval Peres, and B\'{a}lint Vir\'{a}g.
\newblock {\em Zeros of {G}aussian analytic functions and determinantal point
  processes}, volume~51 of {\em University Lecture Series}.
\newblock American Mathematical Society, Providence, RI, 2009.

\bibitem[HL15]{HeL}
Daniel Hexner and Dov Levine.
\newblock Hyperuniformity of critical absorbing states.
\newblock {\em Physical review letters}, 114(11):110602, 2015.

\bibitem[HMS13]{HaMS}
Jakub Haberko, Nicolas Muller, and Frank Scheffold.
\newblock Direct laser writing of three-dimensional network structures as
  templates for disordered photonic materials.
\newblock {\em Physical Review A}, 88(4):043822, 2013.

\bibitem[Ibr68]{Ibragimov}
I.~A. Ibragimov.
\newblock A theorem of {G}abor {S}zeg\"{o}.
\newblock {\em Mat. Zametki}, 3:693--702, 1968.

\bibitem[JLH{\etalchar{+}}14]{JiLHMCT}
Yang Jiao, Timothy Lau, Haralampos Hatzikirou, Michael Meyer-Hermann, Joseph~C
  Corbo, and Salvatore Torquato.
\newblock Avian photoreceptor patterns represent a disordered hyperuniform
  solution to a multiscale packing problem.
\newblock {\em Physical Review E}, 89(2):022721, 2014.

\bibitem[JLM93]{jancovici}
B~Jancovici, Joel~L Lebowitz, and G~Manificat.
\newblock Large charge fluctuations in classical coulomb systems.
\newblock {\em Journal of statistical physics}, 72(3-4):773--787, 1993.

\bibitem[JT11]{JiT}
Yang Jiao and Salvatore Torquato.
\newblock Maximally random jammed packings of platonic solids: Hyperuniform
  long-range correlations and isostaticity.
\newblock {\em Physical Review E}, 84(4):041309, 2011.

\bibitem[Kat04]{katznelson2004introduction}
Yitzhak Katznelson.
\newblock {\em An introduction to harmonic analysis}.
\newblock Cambridge University Press, 2004.

\bibitem[KN{\etalchar{+}}19]{NiKi}
Avner Kiro, Alon Nishry, et~al.
\newblock Rigidity for zero sets of gaussian entire functions.
\newblock {\em Electronic Communications in Probability}, 24, 2019.

\bibitem[Leb83]{lebowitz1983charge}
Joel~L Lebowitz.
\newblock Charge fluctuations in coulomb systems.
\newblock {\em Physical Review A}, 27(3):1491, 1983.

\bibitem[Lin75]{linnik}
I.~Ju. Linnik.
\newblock A multidimensional analogue of {G}. {S}zeg\h{o}'s limit theorem.
\newblock {\em Izv. Akad. Nauk SSSR Ser. Mat.}, 39(6):1393--1403, 1439, 1975.

\bibitem[Luk60]{lukacs}
Eugene Lukacs.
\newblock {\em Characteristic functions}.
\newblock Griffin's Statistical Monographs\& Courses, No. 5. Hafner Publishing
  Co., New York, 1960.

\bibitem[MST13]{MaST}
E.~Marcotte, F.~Stillinger, and S.~Torquato.
\newblock Nonequilibrium static growing length scales in supercooled liquids on
  approaching the glass transition.
\newblock {\em J. Chem. Phys.}, 138, 2013.

\bibitem[MY80]{MaY}
Ph~A Martin and T~Yalcin.
\newblock The charge fluctuations in classical coulomb systems.
\newblock {\em Journal of Statistical Physics}, 22(4):435--463, 1980.

\bibitem[NS12]{nazarov2012correlation}
Fedor Nazarov and Mikhail Sodin.
\newblock Correlation functions for random complex zeroes: strong clustering
  and local universality.
\newblock {\em Communications in Mathematical Physics}, 310(1):75--98, 2012.

\bibitem[Sim05]{simon}
Barry Simon.
\newblock The sharp form of the strong {S}zeg\h{o} theorem.
\newblock In {\em Geometry, spectral theory, groups, and dynamics}, volume 387
  of {\em Contemp. Math.}, pages 253--275. Amer. Math. Soc., Providence, RI,
  2005.

\bibitem[Sze52]{szego}
Gabor Szeg{\"o}.
\newblock On certain hermitian forms associated with the fourier series of a
  positive function.
\newblock {\em Comm. S{\'e}m. Math. Univ. Lund [Medd. Lunds Univ. Mat. Sem.]},
  1952(Tome Suppl{\'e}mentaire):228--238, 1952.

\bibitem[Tor]{To-1}
Salvatore Torquato.
\newblock Random heterogeneous materials: Microstructure and macroscopic
  properties.
\newblock {\em Springer-Verlag, New York}.

\bibitem[TS03]{ToSt}
Salvatore Torquato and Frank~H. Stillinger.
\newblock Local density fluctuations, hyperuniformity, and order metrics.
\newblock {\em Phys. Rev. E (3)}, 68(4):041113, 25, 2003.

\bibitem[TZS15]{ToZS}
Salvatore Torquato, G~Zhang, and FH~Stillinger.
\newblock Ensemble theory for stealthy hyperuniform disordered ground states.
\newblock {\em Physical Review X}, 5(2):021020, 2015.

\bibitem[ZST15a]{ZhST-2}
Ge~Zhang, Frank~H Stillinger, and Salvatore Torquato.
\newblock Ground states of stealthy hyperuniform potentials: I. entropically
  favored configurations.
\newblock {\em Physical Review E}, 92(2):022119, 2015.

\bibitem[ZST15b]{ZhST-1}
Ge~Zhang, Frank~H Stillinger, and Salvatore Torquato.
\newblock Ground states of stealthy hyperuniform potentials. ii. stacked-slider
  phases.
\newblock {\em Physical Review E}, 92(2):022120, 2015.

\bibitem[ZST16]{ZhST-3}
G~Zhang, FH~Stillinger, and Salvatore Torquato.
\newblock Transport, geometrical, and topological properties of stealthy
  disordered hyperuniform two-phase systems.
\newblock {\em The Journal of chemical physics}, 145(24):244109, 2016.

\end{thebibliography}
\bibliographystyle{alpha}

\end{document}